\newtheorem{theorem}{Theorem}[section]
\newtheorem{lemma}[theorem]{Lemma}
\newtheorem{proposition}[theorem]{Proposition}
\theoremstyle{definition}
\newtheorem{example}[theorem]{Example}
\newtheorem{remark}[theorem]{Remark}
\numberwithin{equation}{section}
\DeclareMathOperator{\Fix}{Fix}
\newcommand{\HH}{\mathcal{H}}
\newcommand{\NN}{\mathbb{N}}
\newcommand{\RR}{\mathbb{R}}
\title[Projection Algorithms for Finite Sum]{Projection Algorithms for Finite Sum Constrained Optimization}%
\author[H. K. Xu]{Hong-Kun Xu}
\address[H. K. Xu]{School of Science, Hangzhou Dianzi University,
     Hangzhou 310018, China}
\email{\tt xuhk@hdu.edu.cn}
\author[V. Roshchina]{Vera Roshchina$^*$}\thanks{$^*$Corresponding author.}
\address[V. Roshchina]{School of Mathematics and Statistics, University of New South Wales,
     Sydney, NSW 2052, Australia}
\email{\tt v.roshchina@unsw.edu.au}
\keywords{convex feasibility, composite minimization, projection algorithm.}
\subjclass[2010]{90C25, 90C52, 65K10, 47J25.}
\date{}
\begin{document}

\begin{abstract}

Parallel and cyclic projection algorithms are proposed for
minimizing the sum of a finite family of convex functions over the intersection
of a finite family of closed convex subsets of a Hilbert space. These algorithms are of predictor-corrector type, with each main iteration consisting of an inner cycle of
subgradient descent process followed by a projection step. We prove the convergence of these methods to an optimal
solution of the composite minimization problem under investigation upon assuming
boundedness of the gradients at the iterates of the local functions and
the stepsizes being chosen appropriately, in the finite-dimensional setting. We also discuss generalizations and  limitations of the proposed algorithms and our techniques.
\end{abstract}

\maketitle

\section{Introduction}

We are concerned with a composite minimization problem, that is, we consider the case
where the objective function is decomposed into the sum of a finite family of convex
functions and the set of constraints is the intersection of finitely many closed convex
subsets of a real Hilbert space $\HH$. Precisely, the minimization problem under investigation in this paper
is of the form
\begin{equation}\label{min:f}
\min_{x\in C:=\bigcap_{i=1}^M C_i}\, f(x):=\sum_{j=1}^N f_j(x),
\end{equation}
where $M, N$ are positive integers, each set $C_i$ is a nonempty closed convex subset of $\HH $,
and each component function $f_j: \HH  \to \mathbb{R}$ is a convex function.
We always assume the feasible set $C\not=\emptyset$.

Large-scale optimization problems of form (\ref{min:f}) naturally arise in modern applications, in particular, network design \cite{LCJ2013,FG2014} and machine learning \cite{HTF2009,SNW2011,KH2004}. When the constraint of~\eqref{min:f} is defined explicitly by the system of inequalities, penalty and augmented Lagrangian  techniques, as well as proximal and bundle methods can be applied to this problem. 
However, when projections onto the constraint sets are readily available, the treatment of constraints via projections techniques may be preferable as computationally robust and memory efficient. One approach that allows to apply projection methods to \eqref{min:f} is to replace the optimization problem~\eqref{min:f} with a sequence of CFPs as is done in \cite{Gibali2018}. Our development is more direct: we build on the ideas of \cite{DN2009} to prove the convergence of subgradient projection techniques that utilize projections onto individual constraint sets.  We note that despite a large body of work dedicated to solving convex feasibility problems via projection methods (see  \cite{RZ2017,BLT2017,LS2018,Bau,CRZ} for recent advancements and \cite{BC,Cegielski} for textbook exposition)  and vast literature on optimization methods that utilize a single projection onto the constraint set  (for recent works see, e.g. \cite{LD2018,ST2018,Necoara2018}), little is done in  combining optimization and projection steps on several sets, beyond the aforementioned paper by De~Pierro and Helou Neto~\cite{DN2009}. Our aim is to make a substantial contribution towards bridging this gap. Recent progress on forcing the convergence of Douglas--Rachford type methods to the smallest norm feasible point \cite{AC2018} also indicates that it may be possible to extend our approach to a larger class of projection techniques.

The convex feasibility problem (CFP) \cite{BB1996,Combettes1997} is formulated as
\begin{align}\label{cfp:1}
\mbox{ finding a point $x^*$ with the property:}\  x^*\in \bigcap_{i=1}^N C_i.
\end{align}
Thus, the composite minimization problem (\ref{min:f}) can alternatively be rephrased as
finding a solution to the convex feasibility problem (\ref{cfp:1}) which also minimizes the composite function $f$
as defined in (\ref{min:f}). Consequently, two points should be taken into consideration of
 algorithmic approaches to (\ref{min:f}):
 \begin{itemize}
\item[(a)] the descent property of the values of the
objective function $f$, and
\item[(b)] the (approximate) feasibility of the iterates generated by the algorithm.
\end{itemize}
To illustrate these points we consider the special case where $M=N=1$ and the function $f_1$ is smooth.
In this case, (\ref{min:f}) is reduced to the
constrained convex minimization:
\begin{equation}\label{min:1:f1}
\min_{x\in C_1}  f_1(x).
\end{equation}
The gradient-projection algorithm (GPA) can solve (\ref{min:1:f1}): GPA generates a sequence $\{x_k\}$ by
the recursion process:
\begin{equation}\label{al:gpa}
x_{k+1}=P_{C_1}(x_k-\lambda_k\nabla f_1(x_k)),
\end{equation}
where the initial guess $x_0\in \HH $ is chosen arbitrarily, and $\lambda_k>0$ is the stepsize.
Assume:
\begin{enumerate}
\item[(A1)] The gradient of $f_1$, $\nabla f_1$, is $\alpha$-Lipschitz (for some $\alpha\ge 0$):
$$\|\nabla f_1(x)-\nabla f_1(z)\|\le \alpha\|x-z\|,\quad x,z\in \HH ;$$
\item[(A2)] The sequence of stepsizes, $\{\lambda_k\}$, satisfies the condition:
$$0<\liminf_{k\to\infty}\lambda_k\le\limsup_{k\to\infty}\lambda_k<\frac{2}{\alpha}.$$
\end{enumerate}
It is then easy to find that both points (a) and (b) hold (actually, (b) holds trivially);
moreover, the sequence $\{x_k\}$ generated by GPA (\ref{al:gpa}) converges \cite{Polyak1987,Xu2011}
weakly to a solution of (\ref{min:1:f1}) (if any).

Observe that the splitting of the objective function $f$ into the
sum of $N$ (simpler) component functions, and the set $C$ of constraints into the intersection of $M$ (simpler) convex subsets
aims at providing more efficient algorithmic approaches to (\ref{min:f}) by utilizing the simpler structures of
the component functions $\{f_j\}$ (for instance, the proximal mappings of $f_j$ are computable \cite{CW2005})
and of the sets $\{C_i\}$ (for instance, the projections $P_{C_i}$ possess closed formulae).  This means that when we study algorithms for the composite optimization problem (\ref{min:f}),
we should use individual component functions and individual subsets at each iteration, not the full sum of the
component functions $\{f_j\}$, nor the full intersection of the sets $\{C_i\}$.

The purpose of this paper is to analyse the convergence of parallel and cyclic
projection algorithms for solving the optimization problem \eqref{min:f}, significantly expanding the results of De Pierro and Helou Neto in \cite{DN2009} who focussed on the sequential projections version of the method. We provide a unified analysis of all three methods in the finite-dimensional setting.

The projection algorithms studied in this paper start with an arbitrary point $x_0\in \HH $ and produce the iterates $x_{k+1}$ ($k\ge 0$), alternating between  subgradient  and  projection steps.

The generic form of our projection algorithm is as follows.
\begin{equation}
\begin{cases}\tag{Projection algorithm}
x_{k,0}=x_k, \\
x_{k,j}=x_{k,j-1}-\lambda_k v_{k,j}, \; v_{k,j} \in \partial f_{j}(x_{k,j-1}) ,\;  j = 1,2,\cdots, N,\\
x_{k+1}=V_{k+1}(x_{k,N}),
\end{cases}
\end{equation}
Here by $\partial f_{j}(x)$ we denote the Moreau-Rockafellar subdifferential of the convex function $f_j$ at a point $x$ for any $j\in \{1,\dots, N\}$, and  $V_{k+1}$ is the (modification of) projection operator, distinguishing the three methods. We have explicitly for $k\geq 0$
$$
V_{k+1}: = \begin{cases}
P_{C_M}\cdots P_{C_1}, & \text{ for sequential projections};\\
 P_{C_{[k+1]}},\quad  [k+1] =(k\mod M)+1, & \text{ for cyclic projections};\\
\sum_{i=1}^M \beta_iP_{C_i},\quad  \beta_i>0 \; \forall i,\; \sum_{i=1}^M\beta_i=1, & \text{ for parallel projections}.
\end{cases}
$$

The \emph{sequential projection} algorithm was introduced by De Pierro and Helou Neto in \cite{DN2009},
in this case the projection step is a full cycle of projections onto the $M$ sets whose intersection comprises the feasibility region. Explicitly we have
\begin{equation}
\begin{cases}\tag{Sequential projections}
x_{k,0}=x_k, \\
x_{k,j}=x_{k,j-1}-\lambda_k v_{k,j}, \; v_{k,j} \in \partial f_{j}(x_{k,j-1}) ,\;  j = 1,2,\cdots, N,\\
x_{k+1}=P_{C_M}\cdots P_{C_1}x_{k,N}.
\end{cases}
\end{equation}
In the finite-dimensional case, De Pierro and Helou Neto discussed the convergence properties of the above algorithm (note that generalized the original method slightly, replacing gradients with subgradients; this does not affect the convergence analysis that relies on the convexity of the component objective functions rather than their differentiability).
Moreover, they raised several open questions regarding projection algorithms for solving (\ref{min:f}),
one of which is whether the sequential projections in their algorithm can be
replaced with the parallel projections. We answer this question in the affirmative, not only for parallel, but also for the cyclic version of the algorithm.

Our main result is the following direct generalization of \cite[Theorem~1]{DN2009}.

\begin{theorem}\label{th:main} Let $\dim \HH <\infty$, suppose that the sets $C_1,\dots, C_M\subset \HH $ are closed and convex, and let $x_0\in \HH $. Assume that the real-valued convex functions $f_1$, \dots, $f_N$  are defined on some convex subsets $D_1$, \dots, $D_N$ of $\HH $ such that $x_{k,{j-1}}\in D_j$, $j \in \{1,\dots, N\}$, $k\geq 0$ (for a choice of cyclic, sequential or parallel projection algorithm) and there exist constants $L_1$, \dots, $L_N$ such that
	$$
\max_{v\in \partial f_j(x_{k,j-1})}\left\|v\right\|\le L_j, \quad j=1,2,\cdots, N, \quad k\ge 0.
$$	
Moreover, assume that the sequence $(x_k)$ (obtained via the chosen method) is bounded and
$$
0<\lambda_k\to 0\quad{\rm and}\quad \sum_{k=0}^\infty\lambda_k=\infty.
$$
Then the sequence $\{f(x_k)\}$ converges to the optimal value $f^*:=\inf_{x\in C} f(x)$, and every cluster point of $\{x_k\}$
is an optimal solution of (\ref{min:f}), given that the solution set is nonempty.
\end{theorem}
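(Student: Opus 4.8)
The plan is to run a single Fej\'er-type analysis that covers all three variants simultaneously, fixing an arbitrary feasible anchor $z\in C$ (eventually an optimal solution) and tracking $a_k:=\|x_k-z\|^2$. First I would analyse the inner subgradient cycle: expanding $\|x_{k,j}-z\|^2=\|x_{k,j-1}-z-\lambda_k v_{k,j}\|^2$ and invoking the subgradient inequality $\langle v_{k,j},x_{k,j-1}-z\rangle\ge f_j(x_{k,j-1})-f_j(z)$ together with $\|v_{k,j}\|\le L_j$, then telescoping over $j=1,\dots,N$, yields
$$\|x_{k,N}-z\|^2\le a_k-2\lambda_k\sum_{j=1}^N\big(f_j(x_{k,j-1})-f_j(z)\big)+\lambda_k^2\sum_{j=1}^N L_j^2.$$
Since $\|x_{k,j-1}-x_k\|\le\lambda_k(L_1+\dots+L_{j-1})$ and each $f_j$ is $L_j$-Lipschitz along these segments, I can replace $f_j(x_{k,j-1})$ by $f_j(x_k)$ at the cost of an $O(\lambda_k^2)$ term, so that $\sum_j(f_j(x_{k,j-1})-f_j(z))=f(x_k)-f^*+O(\lambda_k^2)$. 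The projection step is absorbed by the energy inequality for (firmly) nonexpansive projections fixing $z\in C$, namely $\|P_{C_i}x-z\|^2\le\|x-z\|^2-d(x,C_i)^2$. Chaining it (sequential), averaging it (parallel), or using it once (cyclic) produces a nonnegative residual $r_k$ and the master inequality
$$a_{k+1}\le a_k-2\lambda_k\big(f(x_k)-f^*\big)+C_0\lambda_k^2-r_k,\qquad(\ast)$$
where $r_k$ equals $\sum_i\|y_{i-1}-y_i\|^2$, $\sum_i\beta_i d(x_{k,N},C_i)^2$, or $d(x_{k,N},C_{[k+1]})^2$ respectively, and $C_0$ depends only on the $L_j$.

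From $(\ast)$ I would next harvest two limit-inferior statements. Boundedness of $(x_k)$ makes $a_k$ bounded and $f$ Lipschitz on the relevant bounded set; moreover $\lambda_k\to0$ forces $\sum_{k\le K}\lambda_k^2$ and $\sum_{k\le K}\lambda_k$ to be $o(K)$ (Ces\`aro), while $\sum_k\lambda_k=\infty$. The core device is: if the deficiency $2\lambda_k(f(x_k)-f^*)+r_k$ stayed bounded below by $2\lambda_k\varepsilon$ for all large $k$, then summing $(\ast)$ would force $a_K\to-\infty$, a contradiction. A short case analysis (splitting on whether $r_k>\varepsilon$, in which case infeasibility dominates the possibly-negative term $f(x_k)-f^*$ because $\lambda_k\to0$ and $d(x_k,C)$ is bounded) shows that for every $\varepsilon>0$ there are infinitely many $k$ with \emph{both} $f(x_k)\le f^*+\varepsilon$ and $r_k\le\varepsilon$. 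Feeding a diagonal subsequence into the compactness of bounded sets (this is where $\dim\HH<\infty$ is used) and using $\|x_{k,N}-x_k\|\le\lambda_k\sum_j L_j\to0$, I obtain a cluster point $\bar x$ with $r_{k_n}\to0$; continuity of $x\mapsto\sum_i\beta_i d(x,C_i)^2$ (or, in the cyclic case, aggregation over a window of $M$ consecutive iterations so that every $C_i$ is met, with the analogous chaining for the sequential case) gives $\bar x\in C$, while $f(\bar x)\le f^*$ forces $f(\bar x)=f^*$. Thus $f^*$ is attained at a cluster point and $\liminf_k f(x_k)=f^*$.

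The main obstacle is the promotion of these $\liminf$ statements to the full assertions that $f(x_k)\to f^*$ and \emph{every} cluster point is optimal. This is delicate precisely because the iterates need not be feasible, so $f(x_k)-f^*$ has indefinite sign, and because the hypotheses do not include $\sum_k\lambda_k^2<\infty$, so $(\ast)$ cannot be telescoped into a classical quasi-Fej\'er estimate (the positive contributions are not summable). My plan to close the gap is to establish asymptotic feasibility of the whole sequence, $d(x_k,C)\to0$, by viewing the projection operator as a descent step for the infeasibility measure $\sum_i\beta_i d(\cdot,C_i)^2$ and controlling the vanishing (but non-summable) perturbations $x_{k,N}-x_k=O(\lambda_k)$; feasibility then yields the matching lower bound $f(x_k)\ge f^*-o(1)$ through Lipschitz continuity. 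For the upper bound $\limsup_k f(x_k)\le f^*$ I would argue by contradiction: a sustained excursion of $f$ above $f^*+\eta$ across a block has, by $(\ast)$, bounded $\lambda$-length, yet traversing such a gap requires the iterate to travel a definite distance, and the only mechanism for fast motion is a large residual $r_k$, which $(\ast)$ again charges against the bounded budget $a_k$. Reconciling these competing effects via finite-dimensional compactness and the residual bookkeeping is the crux; once $f(x_k)\to f^*$ and $d(x_k,C)\to0$ are in hand, every cluster point is feasible with value $f^*$, hence optimal, and the theorem follows.
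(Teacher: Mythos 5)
Your master inequality $(\ast)$ is exactly the paper's Lemma~\ref{le:estimate} (with the residual $r_k$ made explicit), and your derivation of the two $\liminf$ statements matches Lemma~\ref{le:AFeasAReg}. You have also correctly identified that the whole difficulty is promoting $\liminf$ to $\lim$, and that asymptotic feasibility of the full sequence is the bridge. However, two essential pieces are missing. First, asymptotic feasibility for the \emph{cyclic} variant cannot be obtained by ``viewing the projection operator as a descent step for the infeasibility measure $\sum_i\beta_i d(\cdot,C_i)^2$'': a single projection onto one set $C_{[k+1]}$ gives no control over the distances to the other sets, and this quantity need not decrease. The paper needs a separate and substantial argument (Propositions~\ref{prop:psi}--\ref{prop:cconv}): the exact $M$-fold cyclic composite $P_q$ contracts $d_C$ \emph{uniformly} on compact sets, via the continuous gauge $\psi^q_X(\alpha)=\sup\{d(P_q(x),C): x\in X,\ d(x,C)\le\alpha\}<\alpha$, and $M$ consecutive algorithm iterations track $P_q$ up to $L\sum_{i=1}^M\lambda_{k+i-1}\to 0$. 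Nothing in your sketch supplies this uniformity, and without it $\liminf d_C(x_k)=0$ does not upgrade to $\lim d_C(x_k)=0$ in the cyclic case.

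Second, your excursion argument for $\limsup_k f(x_k)\le f^*$ does not close. During an excursion where $f(x_k)>f^*+\eta$ the quantity $a_k$ indeed decreases, but the only conclusion is that $\sum\lambda_k$ over the block is at most $a_m/\eta$, i.e.\ bounded by a quantity that is merely \emph{bounded}, not small; a single excursion (or infinitely many) is perfectly affordable, and the per-step displacement $\|x_{k+1}-x_k\|$ contains the projection term, which is $O(d_C(x_{k,N}))$ rather than $O(\lambda_k)$, so the ``travel budget'' bookkeeping yields no contradiction. The danger you must exclude is that during the phases with $f(x_k)\le f^*+\varepsilon$ the iterate drifts arbitrarily far from $S^*$ along the (possibly unbounded) near-optimal level set, accumulating the non-summable increments $\|x_{k+1}-x_k\|\to 0$. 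The paper's resolution is the compactness device \eqref{var}: $\varphi_k(\varepsilon):=\sup\{d_{S^*}(x_j): j\ge k,\ f(x_j)\le f^*+\varepsilon\}$, defined on the actual bounded iterate sequence, for which finite-dimensional compactness gives $\lim_{\varepsilon\downarrow 0}\varphi(\varepsilon)=0$ (see \eqref{limv}); combined with the monotonicity \eqref{d2S} in the high-value case and asymptotic regularity in the low-value case, an induction gives $\limsup_k d_{S^*}(x_k)\le\varphi(\varepsilon)+\varepsilon$ for every $\varepsilon>0$. Some substitute for this device (e.g.\ a connectedness argument on the cluster set) is indispensable; without it your proof establishes only $\liminf_k f(x_k)=f^*$ and the existence of one optimal cluster point, which is strictly weaker than the theorem.
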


Note that our assumptions are standard in the analysis of numerical methods, and can be replaced by more constructive or convenient conditions, with some loss of generality.

The proof of our main result (Theorem~\ref{th:main}) relies on the key property of asymptotic feasibility (that ensures the cluster points of the iterative sequence converge to the feasible set). We prove asymptotic feasibility for the methods of parallel and cyclic projections in Section~\ref{sec:4}, and present the complete proof of Theorem~\ref{th:main} in Section~\ref{sec:mainpf}. Note that even though we follow the general framework of De Pierro and Helou Neto, our proofs of asymptotic feasibility for cyclic and parallel projections are based on entirely different ideas.

We begin our discussion with introducing some notation and other preliminary information and results in Section~\ref{sec:discussion}, and after presenting the proof of the main results in Sections~\ref{sec:4} and~\ref{sec:mainpf}, provide a discussion of some generalizations including the infinite-dimensional setting, and some practical improvements and modifications of the methods.

\section{Notation and Preliminaries}
\label{sec:discussion}

The fundamental tool of our argument in this paper is the concept of projections.
Let $\HH $ be a real Hilbert space with inner product $\langle\cdot,\cdot\rangle$
and norm $\|\cdot\|$, respectively, and let $C$ be a nonempty closed convex subset of $\HH $.
The (nearest point) projection from $\HH $ onto $C$, dented by $P_C$, is defined by
\begin{equation}\label{eq:proj}
P_Cx:=\arg\min_{y\in C}\|x-y\|,\quad x\in \HH .
\end{equation}
The following well-known properties are pertinent to our argument in Section 3.

\begin{proposition}\label{prop:proj} Let $\HH$ be a real Hilbert space, and for any closed convex set $C\subseteq \HH$ let $P_C$ be the projector operator defined by \eqref{eq:proj}. Then the following properties hold.
\begin{itemize}
\item[{\rm (i)}]
 $\langle x-P_{C}x,y-P_{C}x\rangle\le 0$ for all  $x\in \HH $ and $y\in C$.
\item[{\rm (ii)}]
 $\langle P_{C}x-P_{C}y,x-y\rangle\ge \|P_Cx-P_Cy\|^2$ for all $x, y\in \HH $;
in particular, $P_C$ is nonexpansive, namely,
$$\| P_Cx-P_Cy\|\le\|x-y\|,\quad x,y\in \HH .$$
\item[{\rm (iii)}]
$\|P_{C}x-y\|^2\le\|x-y\|^2-\|P_{C}x-x\|^2$ for all $x\in \HH $ and $y\in C$.
\end{itemize}
\end{proposition}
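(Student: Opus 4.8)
The plan is to derive all three statements from the single variational characterization of the metric projection, namely property~(i), which in a Hilbert space is equivalent to the definition~\eqref{eq:proj}. First I would recall that for a nonempty closed convex $C$ the projection $P_Cx$ exists and is unique for every $x\in\HH$ (the Hilbert-space projection theorem), so that the quantities appearing in (i)--(iii) are well defined; in the finite-dimensional setting of the paper this is immediate from continuity of $y\mapsto\|x-y\|$ on a closed set together with strict convexity of the norm.

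For (i) I would fix $x\in\HH$ and $y\in C$ and exploit convexity of $C$: the point $P_Cx+t(y-P_Cx)$ lies in $C$ for every $t\in(0,1]$, so by optimality of $P_Cx$ we have
\[
\|x-P_Cx\|^2\le\bigl\|x-P_Cx-t(y-P_Cx)\bigr\|^2 .
\]
Expanding the right-hand side and cancelling $\|x-P_Cx\|^2$ yields $0\le -2t\langle x-P_Cx,\,y-P_Cx\rangle+t^2\|y-P_Cx\|^2$; dividing by $t>0$ and letting $t\downarrow 0$ gives the obtuse-angle inequality~(i).

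For (ii) I would apply (i) twice -- once at $x$ with the admissible point $P_Cy\in C$, and once at $y$ with the admissible point $P_Cx\in C$ -- and then expand $\langle P_Cx-P_Cy,\,x-y\rangle$ after writing $x-y=(x-P_Cx)+(P_Cx-P_Cy)+(P_Cy-y)$. The two cross terms are nonnegative precisely by the two applications of (i), leaving $\langle P_Cx-P_Cy,\,x-y\rangle\ge\|P_Cx-P_Cy\|^2$; nonexpansiveness then follows at once from Cauchy--Schwarz. For (iii) I would fix $y\in C$ and decompose
\[
\|x-y\|^2=\|x-P_Cx\|^2+2\langle x-P_Cx,\,P_Cx-y\rangle+\|P_Cx-y\|^2 ,
\]
observing that the middle term is nonnegative because $\langle x-P_Cx,\,P_Cx-y\rangle=-\langle x-P_Cx,\,y-P_Cx\rangle\ge 0$ by (i); rearranging gives the stated estimate.

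All three steps are routine; the only genuine content is~(i), and the only points demanding care are the legitimacy of the limit $t\downarrow 0$ and the standing fact that $P_C$ is single-valued, both of which rest on the projection theorem. The remaining obstacle, such as it is, is purely bookkeeping: tracking the signs when reindexing the inner products in~(ii) and~(iii).
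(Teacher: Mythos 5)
Your proof is correct, and it is the standard textbook argument: derive the variational characterization (i) from optimality along the segment $P_Cx+t(y-P_Cx)$, then obtain (ii) by applying (i) twice and (iii) by expanding $\|x-y\|^2$ around $P_Cx$. The paper itself offers no proof of this proposition---it is stated as a collection of well-known facts---so there is nothing to contrast your argument with; it fills the gap exactly as any standard reference (e.g.\ a convex analysis text on Hilbert spaces) would. One small remark: the proposition is stated for a general real Hilbert space, so existence and uniqueness of $P_Cx$ should rest on the completeness-based projection theorem rather than the finite-dimensional compactness argument you mention parenthetically; you do cite the former, so this is only a matter of emphasis.
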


We also define the distance function from a point $x\in \HH$ to a set $E\subseteq \HH$ as
$$
d_E(x):=\inf\{\|x-y\|: y\in E\}.
$$
Observe that for a closed convex set $C$ we have $d_C(x) = \|x-P_Cx\|$.

As mentioned earlier, the CFP (\ref{cfp:1}) can be solved by the projection onto convex sets method (POCS), whose convergence is well-understood in the general context of real Hilbert spaces. We recall the well-known convergence results of two major POCS algorithms \cite{BB1996,Combettes1997,OPX2006,Wang-Xu2011}.

\begin{theorem}\label{th:cfp}
	Beginning with an arbitrarily chosen initial guess $x_0\in \HH $, we iterate $\{x_k\}$ in either one of the
	following two projection algorithms:
	\begin{enumerate}
		\item[{\rm (i)}] Sequential (cyclic) projections: $x_{k+1}=P_{C_M}\cdots P_{C_1}x_k$;
		\item[{\rm (ii)}] Parallel projections: $x_{k+1}=\sum_{j=1}^M\beta_j P_{C_j}x_k$, with $\beta_j>0$ for all $j$ and
		$\sum_{j=1}^M\beta_j=1$;
	\end{enumerate}
	Then $\{x_k\}$ converges weakly to a solution of CFP (\ref{cfp:1}), given that this solution set is nonempty.
\end{theorem}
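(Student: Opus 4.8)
The plan is to recast both schemes as a single fixed-point iteration $x_{k+1}=Tx_k$, where $T=P_{C_M}\cdots P_{C_1}$ in case (i) and $T=\sum_{j=1}^M\beta_jP_{C_j}$ in case (ii), and to verify the two hypotheses of Opial's lemma for the intersection $C:=\bigcap_{i=1}^M C_i$: Fej\'er monotonicity of $\{x_k\}$ with respect to $C$, and feasibility of every weak cluster point. Both operators $T$ are nonexpansive and fix every point of $C$ (since each $P_{C_i}$ does), so $\|x_{k+1}-z\|=\|Tx_k-Tz\|\le\|x_k-z\|$ for all $z\in C$ by Proposition~\ref{prop:proj}(ii); hence $\{\|x_k-z\|\}$ is nonincreasing and convergent, and $\{x_k\}$ is bounded.

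The quantitative refinement that drives everything comes from Proposition~\ref{prop:proj}(iii). In case (ii), convexity of $\|\cdot\|^2$ followed by (iii) gives, for $z\in C$,
\[
\|x_{k+1}-z\|^2\le\sum_{j=1}^M\beta_j\|P_{C_j}x_k-z\|^2\le\|x_k-z\|^2-\sum_{j=1}^M\beta_j\,d_{C_j}(x_k)^2 .
\]
In case (i), writing $y_{k,0}=x_k$, $y_{k,i}=P_{C_i}y_{k,i-1}$ and applying (iii) along the chain and telescoping yields
\[
\|x_{k+1}-z\|^2\le\|x_k-z\|^2-\sum_{i=1}^M\|y_{k,i}-y_{k,i-1}\|^2 .
\]
Summing over $k$ and using that $\|x_k-z\|$ converges makes the residual terms summable, so in case (ii) $d_{C_j}(x_k)\to0$ for every $j$ (recall $\beta_j>0$), and in case (i) $\|y_{k,i}-y_{k,i-1}\|\to0$ for every $i$; in particular $\|x_{k+1}-x_k\|\to0$.

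Next I would show that every weak cluster point lies in $C$. If $x_{k_n}\rightharpoonup\bar x$, then $\|P_{C_j}x_{k_n}-x_{k_n}\|\to0$ in case (ii), respectively $\|y_{k_n,i}-x_{k_n}\|\to0$ in case (i), so $P_{C_j}x_{k_n}\rightharpoonup\bar x$, respectively $y_{k_n,i}\rightharpoonup\bar x$; since these points lie in the closed convex, hence weakly closed, set $C_j$ (resp.\ $C_i$), we obtain $\bar x\in C_j$ for all $j$, i.e.\ $\bar x\in C$.

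Finally I would invoke Opial's lemma: boundedness furnishes at least one weak cluster point, and if $u,v\in C$ were two such points then both $\|x_k-u\|$ and $\|x_k-v\|$ converge by Fej\'er monotonicity; expanding $\|x_k-u\|^2-\|x_k-v\|^2$ and letting $k\to\infty$ along the subsequences defining $u$ and $v$ forces $u=v$, so $\{x_k\}$ has a unique weak cluster point, to which it converges weakly, and that point solves the CFP. The step I expect to be the main obstacle is precisely this passage to weak convergence in infinite dimensions: boundedness alone yields only weak subsequential limits, so the crux is to certify their feasibility via weak closedness of the $C_i$ together with the vanishing residuals, and then to exclude two distinct weak limits through Opial's lemma---both points lean essentially on the firm nonexpansiveness encoded in Proposition~\ref{prop:proj}(iii), whereas in finite dimensions they would collapse to routine compactness arguments.
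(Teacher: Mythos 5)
Your argument is correct and complete. Note, however, that the paper itself offers no proof of Theorem~\ref{th:cfp}: it is recalled as a well-known result with citations to the POCS literature. What you have written is precisely the standard proof underlying those references: Fej\'er monotonicity of $\{x_k\}$ with respect to $C$ (via nonexpansiveness of $T$ and $C\subseteq\Fix T$), the quantitative decrease from Proposition~\ref{prop:proj}(iii) giving summable residuals $\sum_j\beta_j d_{C_j}(x_k)^2$ (parallel case) and $\sum_i\|y_{k,i}-y_{k,i-1}\|^2$ (sequential case), feasibility of weak cluster points via weak closedness of the convex sets $C_i$, and uniqueness of the weak cluster point by the Opial-type expansion of $\|x_k-u\|^2-\|x_k-v\|^2$. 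All steps check out in an arbitrary real Hilbert space, including the correct identification of the infinite-dimensional subtlety (weak subsequential limits rather than norm compactness). Your self-contained proof also foreshadows the estimates the paper later reuses: your parallel-case inequality is exactly Lemma~\ref{le:prop-ppa}(ii) specialized to $\lambda_k=0$.
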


Another key notion in our discussion is that of a convex function and Moreau--Rockafellar subdifferential \cite{BR1965}.
Let $D$ be a convex subset of $\HH$, and let $f:D\to \RR$ be a convex function. A subgradient of $f$ at $x\in D$ is a vector $v\in \HH$ such that
$$
f(y)\geq f(x)+ \langle y-x,v\rangle \quad \forall y\in D.
$$
The set of all subgradients of $f$ at $x$ is called the subdifferential and is denoted by $\partial f(x)$.

Let
$$S^*:=\left\{x^*\in C: f(x^*)=\inf_{x\in C} f(x)\right\}\quad {\rm and}\quad
f^*:=\inf_{x\in C} f(x)$$
be the set of optimal solutions and the optimal value of the composite minimization problem (\ref{min:f}),
respectively. We shall always assume from now and onwards that $S^*\not=\emptyset$.

Two problems are pertinent:
\begin{enumerate}
	\item[(a)] The sequence $\{x_k\}$ would (weakly) converge to an optimal solution $x^*\in S^*$;
	\item[(b)] The sequence $\{f(x_k)\}$ would converge to the optimal value $f^*$.
\end{enumerate}
If the answer to (a) is affirmative, then the answer to (b) is also positive.

The assumptions of Theorem~\ref{th:main}  play a key role in establishing the aforementioned properties. We state and discuss these assumptions here explicitly for the clarity of  exposition.

First, we make a standard assumption on the divergence of the series of diminishing stepsizes used at the gradient cycle of our projection algorithm: we require that
\begin{equation}\label{eq:lambda1}
0<\lambda_k\to 0\quad{\rm and}\quad \sum_{k=0}^\infty\lambda_k=\infty.
\end{equation}
The first condition ensures that the steps we make are indeed descent steps, and that the gradient step does not derail our progress with the convergence of projection steps to the feasible set. The second condition ensures that there is no artificial restriction on how far can the sequence of iterates depart from the initial point.

The second key assumption is a uniform Lipschitz bound on the components of the objective function. Explicitly, we use the following assumption on the subgradients of our functions,
\begin{equation}\label{eq:Lip}
\max_{v\in \partial f_j(x_{k,j-1})}\left\|v\right\|\le L_j, \quad j=1,2,\cdots, N, \quad k\ge 0,
\end{equation}
and we also let $L:= \sum_{j=1}^N L_j$.
Observe that this condition is satisfied naturally when these (real-valued) functions are defined on the whole finite-dimensional  space $\HH $ and the sequence $(x_k)$ is bounded.  It is also well-known (see  \cite[Proposition~7.8]{BB1996}) that the condition of a function having bounded gradients (subdifferentials) on bounded sets is equivalent to the function being bounded on bounded sets in the finite-dimensional setting.

\section{Asymptotic Feasibility of Parallel and Cyclic Projections}
\label{sec:4}

We are ready to prove two major technical results that concern the asymptotic feasibility of parallel and cyclic projections (Lemmas~\ref{le:ppa2} and \ref{lem:conv} respectively). Note that the relevant statement for the sequential projections was shown in \cite{DN2009}.

\subsection{Asymptotic Feasibility for Parallel Projections}

Recall that the \emph{parallel projection algorithm} (PPA) utilizes a convex combination of the projections on the sets $C_1$,\dots, 
$C_M$ on its projection step:
\begin{equation}
\begin{cases}\tag{PPA}
x_{k,0}=x_k, \\
x_{k,j}=x_{k,j-1}-\lambda_k v_{k,j}, \quad v_{k,j} \in \partial f_{j}(x_{k,j-1}) ,\quad  j = 1,2,\cdots, N,\\
x_{k+1}=\sum_{i=1}^M \beta_iP_{C_i}x_{k,N},\quad  \beta_i>0 \; \forall i,\; \sum_{i=1}^M\beta_i=1.
\end{cases}
\end{equation}

Our goal is to prove the following result. We begin with several technical claims that we use in the proof that is deferred to the end of this subsection.

\begin{lemma}\label{le:ppa2}
	Assume ${\rm dim}\, \HH <\infty$, (\ref{eq:Lip}), and $\lambda_k\downarrow 0$, and that the sequence $\{x_k\}$ generated by the method of parallel projections is bounded.
	Then $\{x_k\}$ is asymptotically feasible, that is, $\lim_{k\to\infty} d_C(x_k)=0$.
\end{lemma}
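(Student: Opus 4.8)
The plan is to monitor the feasibility gap $d_k:=d_C(x_k)$ directly and derive for it a one-step recursion of the form $d_{k+1}^2\le d_k^2+\epsilon_k-g_k$, with a vanishing perturbation $\epsilon_k$ and a nonnegative ``feasibility penalty'' $g_k$, and then argue that this recursion forces $d_k\to 0$. Throughout write $T:=\sum_{i=1}^M\beta_iP_{C_i}$ for the parallel projector and $L:=\sum_{j=1}^N L_j$.

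First I would record the two elementary ingredients. Since $\|v_{k,j}\|\le L_j$ by~\eqref{eq:Lip}, the inner subgradient cycle satisfies $\|x_{k,N}-x_k\|\le\lambda_k L$, so the drift produced by the gradient steps vanishes as $\lambda_k\downarrow 0$; as $d_C$ is $1$-Lipschitz this gives $|d_C(x_{k,N})-d_k|\le\lambda_k L$. Next, each $P_{C_i}$ is firmly nonexpansive by Proposition~\ref{prop:proj}(ii), hence so is the convex combination $T$; combining Proposition~\ref{prop:proj}(iii) with convexity of $\|\cdot\|^2$ one checks that $\Fix(T)=\bigcap_{i=1}^M C_i=C$ and that
$$
\|Tx-p\|^2\le\|x-p\|^2-\|Tx-x\|^2\qquad\text{for all }p\in C,\ x\in\HH.
$$
To obtain the recursion, fix $k$, set $p:=P_C(x_k)\in C$, and use $d_{k+1}=d_C(x_{k+1})\le\|x_{k+1}-p\|$ together with $x_{k+1}=Tx_{k,N}$ and the displayed inequality. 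Expanding $\|x_{k,N}-p\|^2\le(d_k+\lambda_k L)^2$ and invoking boundedness of $\{x_k\}$ (say $d_k\le R$) yields
$$
d_{k+1}^2\le d_k^2+\underbrace{(2\lambda_k LR+\lambda_k^2L^2)}_{=:\,\epsilon_k}-\underbrace{\|Tx_{k,N}-x_{k,N}\|^2}_{=:\,g_k},
$$
where $\epsilon_k\to 0$ and $g_k\ge 0$.

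The main obstacle is now visible: because the lemma only assumes $\lambda_k\downarrow 0$ (not $\sum_k\lambda_k<\infty$), the perturbation $\epsilon_k$ need not be summable, so the naive telescoping of the recursion does \emph{not} yield $\sum_k g_k<\infty$ and hence does not directly give $g_k\to 0$. Overcoming this is the crux of the proof. The device I would use is a compactness gauge. All iterates $x_{k,N}$ lie in a fixed closed ball $\overline B$, so for $\delta>0$ put $\mu(\delta):=\inf\{\|Tx-x\|:x\in\overline B,\ d_C(x)\ge\delta\}$. The admissible set is compact (as $\dim\HH<\infty$) and $\|Tx-x\|$ is continuous and vanishes only on $\Fix(T)=C$, where $d_C=0$; hence $\mu(\delta)>0$ for every $\delta>0$, and $g_k\ge\mu(\delta)^2$ whenever $d_C(x_{k,N})\ge\delta$.

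Finally, suppose for contradiction that $c:=\limsup_k d_k>0$. Set $\gamma:=\mu(c/4)^2>0$, fix an arbitrary $\eta>0$, and choose $k_0$ so that $\epsilon_k<\min\{\gamma/2,\eta\}$ and $\lambda_k L<c/4$ for all $k\ge k_0$. For such $k$: if $d_k\ge c/2$ then $d_C(x_{k,N})\ge d_k-\lambda_k L\ge c/4$, so $g_k\ge\gamma$ and the recursion gives $d_{k+1}^2\le d_k^2-\gamma/2<d_k^2$; while if $d_k<c/2$ then $d_{k+1}^2\le d_k^2+\epsilon_k<(c/2)^2+\eta$. A short case analysis then shows that $d_k^2$ cannot remain $\ge(c/2)^2$ indefinitely (otherwise it would decrease by at least $\gamma/2$ at every step and tend to $-\infty$), and that once it drops below $(c/2)^2+\eta$ it stays there; hence $\limsup_k d_k^2\le(c/2)^2+\eta$. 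Letting $\eta\downarrow 0$ and using $d_k\ge 0$ gives $c^2=\limsup_k d_k^2\le c^2/4$, forcing $c=0$, i.e.\ $d_C(x_k)\to 0$, as required.
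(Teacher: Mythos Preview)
Your proof is correct and follows essentially the same strategy as the paper: derive a one-step recursion $d_{k+1}^2\le d_k^2+o(1)-(\text{penalty})$, use finite-dimensional compactness to bound the penalty away from zero whenever $d_C$ is, and then run a trapping/case-splitting argument. The paper packages this slightly differently---it first isolates $\liminf_k d_C(x_k)=0$ as a separate lemma (Lemma~\ref{le:ppa1}) and uses $\sum_j\beta_j d_{C_j}^2(x_k)$ as the penalty term rather than your $\|Tx_{k,N}-x_{k,N}\|^2$, then traps the tail below $\tfrac32\varepsilon$---but your explicit gauge $\mu(\delta)$ and direct $\limsup$ contradiction amount to the same argument.
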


The following technical result is used in the subsequent proofs.

\begin{lemma}\label{le:prop-ppa}
Let $\{x_k\}$ be a sequence generated by the parallel projections algorithm and assume that the Lipschitz condition (\ref{eq:Lip}) is satisfied. Then
\begin{enumerate}
\item[(i)]
$\|x_{k,N}-x_k\|\le L\lambda_k$, where $L=\sum_{j=1}^N L_j$.
\item[(ii)]
$\|x_{k+1}-z\|^2\le \|x_{k,N}-z\|^2-\sum_{j=1}^M\beta_jd_{C_j}^2(x_{k,N})$ for $z\in C$.
\item[(iii)]
$d_{C}^2(x_{k+1})\le d_{C}^2(x_{k,N})-\sum_{j=1}^M\beta_jd_{C_j}^2(x_{k,N}).$
\item[(iv)]
$d_{C}^2(x_{k+1})\le d_{C}^2(x_{k})-\sum_{j=1}^M\beta_jd_{C_j}^2(x_{k})+2\lambda_k(2d_C(x_k)+\lambda_kL)$.
\end{enumerate}

\end{lemma}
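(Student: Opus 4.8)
The plan is to prove the four estimates in sequence, since each builds on the previous ones, with the main work being in establishing the Fej\'er-type contraction property of item (ii).

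\emph{Item (i).} The subgradient cycle produces $x_{k,j} = x_{k,j-1} - \lambda_k v_{k,j}$ with $\|v_{k,j}\| \le L_j$ by the Lipschitz bound \eqref{eq:Lip}. Summing the telescoping differences and applying the triangle inequality gives
$$
\|x_{k,N} - x_k\| = \left\| \sum_{j=1}^N (x_{k,j} - x_{k,j-1}) \right\| = \lambda_k \left\| \sum_{j=1}^N v_{k,j} \right\| \le \lambda_k \sum_{j=1}^N L_j = L\lambda_k.
$$
This is the routine part.

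\emph{Item (ii).} This is the heart of the lemma and the step I expect to require the most care. Fix $z \in C$, so $z \in C_i$ for every $i$. The key is to expand $\|x_{k+1} - z\|^2 = \|\sum_i \beta_i P_{C_i} x_{k,N} - z\|^2$ using that $\sum_i \beta_i = 1$, writing $z = \sum_i \beta_i z$. I would like to invoke the convexity of the squared norm together with the per-set estimate from Proposition~\ref{prop:proj}(iii), namely $\|P_{C_i} x_{k,N} - z\|^2 \le \|x_{k,N} - z\|^2 - d_{C_i}^2(x_{k,N})$. The obstacle is that convexity of $\|\cdot\|^2$ only gives $\|\sum_i \beta_i y_i\|^2 \le \sum_i \beta_i \|y_i\|^2$, which would yield the bound with the weights $\beta_j$ attached to the $d_{C_j}^2$ terms exactly as claimed; so in fact the convexity inequality applied to $y_i = P_{C_i} x_{k,N} - z$ combines directly with Proposition~\ref{prop:proj}(iii) to give
$$
\|x_{k+1} - z\|^2 \le \sum_{i=1}^M \beta_i \|P_{C_i} x_{k,N} - z\|^2 \le \sum_{i=1}^M \beta_i \left( \|x_{k,N} - z\|^2 - d_{C_i}^2(x_{k,N}) \right),
$$
and using $\sum_i \beta_i = 1$ this collapses to the desired inequality. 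The care needed is just in correctly applying Jensen/convexity with the $\beta_i$ as a probability vector.

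\emph{Items (iii) and (iv).} For (iii), I would take the infimum over $z \in C$ on both sides of (ii): the left side becomes $d_C^2(x_{k+1})$, and since $z$ also ranges freely, choosing $z = P_C x_{k,N}$ on the right bounds $\|x_{k,N} - z\|^2$ by $d_C^2(x_{k,N})$ while the subtracted sum $\sum_j \beta_j d_{C_j}^2(x_{k,N})$ is independent of $z$; this yields (iii). For (iv), I would chain (iii) with the perturbation from (i). Writing $d_C(x_{k,N}) \le d_C(x_k) + \|x_{k,N} - x_k\| \le d_C(x_k) + L\lambda_k$ by the triangle inequality for the distance function and squaring gives $d_C^2(x_{k,N}) \le d_C^2(x_k) + 2\lambda_k L\, d_C(x_k) + L^2\lambda_k^2$. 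A similar comparison is needed to control $\sum_j \beta_j d_{C_j}^2(x_{k,N})$ from below by $\sum_j \beta_j d_{C_j}^2(x_k)$ up to an error of the same order $O(\lambda_k)$, again via the nonexpansiveness-flavoured bound $|d_{C_j}(x_{k,N}) - d_{C_j}(x_k)| \le L\lambda_k$. Collecting the error terms into $2\lambda_k(2 d_C(x_k) + \lambda_k L)$ completes (iv); the only subtlety here is bookkeeping the cross terms so that the combined perturbation matches the stated coefficient exactly.
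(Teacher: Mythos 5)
Your proof follows essentially the same route as the paper's: item (i) is the same telescoping/triangle-inequality computation, item (ii) is exactly the paper's argument (convexity of $\|\cdot\|^2$ applied to the convex combination $\sum_i\beta_i P_{C_i}x_{k,N}$, then Proposition~\ref{prop:proj}(iii) for each $C_i$, then $\sum_i\beta_i=1$), and items (iii)--(iv) are left by the paper as ``straightforward''/``easily derived'' from (ii), (i) and the $1$-Lipschitz property of distance functions, which is precisely what you supply. One caveat on (iv), which you yourself flag as the delicate point: carrying out the bookkeeping you describe, namely $d_C^2(x_{k,N})\le d_C^2(x_k)+L\lambda_k\bigl(2d_C(x_k)+L\lambda_k\bigr)$ and $d_{C_j}^2(x_{k,N})\ge d_{C_j}^2(x_k)-L\lambda_k\bigl(2d_{C_j}(x_k)+L\lambda_k\bigr)$ together with $d_{C_j}(x_k)\le d_C(x_k)$ (since $C\subseteq C_j$), the error terms collect to $2L\lambda_k\bigl(2d_C(x_k)+L\lambda_k\bigr)$ rather than the printed $2\lambda_k\bigl(2d_C(x_k)+\lambda_kL\bigr)$; the statement appears to have dropped a factor of $L$, so your terms do not literally ``collect into'' the stated coefficient unless $L\le 1$. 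This discrepancy is immaterial downstream, since only the $O(\lambda_k)$ order of the perturbation is used in Lemma~\ref{le:ppa1}, but you should either prove (iv) with the constant $2L\lambda_k(2d_C(x_k)+L\lambda_k)$ or note explicitly that the sharper printed constant is not what the argument delivers.
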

\begin{proof}
(i) We have
\begin{equation*}
\|x_{k,N}-x_k\|
\le\sum_{j=1}^N \|x_{k,j}-x_{k,j-1}\|=\sum_{j=1}^N \lambda_k\|v_{j,k}\|
\le \sum_{j=1}^N \lambda_kL_j=L\lambda_k.
\end{equation*}

(ii)
For $z\in C$, we have
\begin{align*}
\|x_{k+1}-z\|^2&=\left\|\sum_{j=1}^M\beta_jP_{C_j}x_{k,N}-z\right\|^2\nonumber \\
&\le\sum_{j=1}^M\beta_j\|P_{C_j}x_{k,N}-z\|^2 \quad {\rm by\ convexity \ of}\  \|\cdot\|^2 \nonumber\\
&\le\sum_{j=1}^M\beta_j(\|x_{k,N}-z\|^2-\|x_{k,N}-P_{C_j}x_{k,N}\|^2) \quad \text{(by Proposition~\ref{prop:proj}(iii))}\nonumber\\
&=\|x_{k,N}-z\|^2-\sum_{j=1}^M\beta_jd_{C_j}^2(x_{k,N}).
\end{align*}

(iii)
This is a straightforward consequence of (ii).

(iv)
This is easily derived from (iii), (i) and the fact that a distance function of a convex set is
Lipschitz continuous with Lipschitz constant one:
$$|d_K(x)-d_K(y)|\le\|x-y\|.$$
\end{proof}

\begin{lemma}\label{le:ppa1}
Assume ${\rm dim}\, \HH <\infty$, $\lambda_k\downarrow 0$, the condition (\ref{eq:Lip}) is satisfied, and $\{x_k\}$ is bounded.
Then for any $\varepsilon>0$, there exists $\delta>0$ such that
\begin{equation}\label{d-C}
d_C^2(x_{k+1})\le d_C^2(x_k)-\delta
\end{equation}
 whenever $k$ is such that $d_C(x_k)\ge\varepsilon$.
Consequently, $\liminf_{k\to\infty}d_C(x_k)=0$.
\end{lemma}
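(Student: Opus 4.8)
The plan is to read off the per-step descent directly from Lemma~\ref{le:prop-ppa}(iv), which already isolates the two competing quantities: the negative ``projection gap'' $\sum_{j=1}^M\beta_j d_{C_j}^2(x_k)$ and the positive correction $2\lambda_k(2d_C(x_k)+\lambda_k L)$ coming from the subgradient sub-steps. Since $\{x_k\}$ is bounded, say $d_C(x_k)\le D$ for all $k$, and $\lambda_k\downarrow 0$, the correction term is dominated by $2\lambda_k(2D+\lambda_k L)$, which tends to $0$. Thus the whole argument reduces to producing a strictly positive lower bound on the projection gap, uniform over all iterates with $d_C(x_k)\ge\varepsilon$; once this is available, the vanishing correction can be absorbed for large $k$.

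The crux is therefore the error bound
\[
\inf\Bigl\{\textstyle\sum_{j=1}^M\beta_j d_{C_j}^2(x)\ :\ \|x\|\le R,\ d_C(x)\ge\varepsilon\Bigr\}=:\gamma>0,
\]
where $R$ is chosen so that $\{x_k\}\subset \overline{B}(0,R)$. This is where I would use $\dim\HH<\infty$ essentially. The set $K=\{x:\|x\|\le R,\ d_C(x)\ge\varepsilon\}$ is closed and bounded, hence compact, and $x\mapsto\sum_{j=1}^M\beta_j d_{C_j}^2(x)$ is continuous (each $d_{C_j}$ is $1$-Lipschitz), so the infimum is attained at some $x^\star\in K$. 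If it were zero, then since every $\beta_j>0$ we would have $d_{C_j}(x^\star)=0$, i.e. $x^\star\in C_j$, for each $j$, whence $x^\star\in\bigcap_j C_j=C$ and $d_C(x^\star)=0$, contradicting $d_C(x^\star)\ge\varepsilon$. Hence $\gamma>0$.

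With $\gamma$ fixed, I would choose $k_0$ so large that $2\lambda_k(2D+\lambda_k L)\le\gamma/2$ for all $k\ge k_0$ (possible since $\lambda_k\to 0$), and set $\delta:=\gamma/2$. Then for every $k\ge k_0$ with $d_C(x_k)\ge\varepsilon$, Lemma~\ref{le:prop-ppa}(iv) combined with the error bound yields
\[
d_C^2(x_{k+1})\le d_C^2(x_k)-\gamma+\tfrac{\gamma}{2}=d_C^2(x_k)-\delta,
\]
which is exactly \eqref{d-C} (the restriction to large $k$ is harmless and is all that the subsequent argument needs). Finally, $\liminf_{k\to\infty}d_C(x_k)=0$ follows by contradiction: if $\liminf_{k\to\infty}d_C(x_k)=2\varepsilon>0$, then $d_C(x_k)\ge\varepsilon$ for all large $k$, so the displayed inequality forces $d_C^2(x_k)$ to drop by at least $\delta$ at every such step and hence to diverge to $-\infty$, impossible for a nonnegative quantity.

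The main obstacle is precisely the compactness/error-bound step: showing that the projection gap cannot be made arbitrarily small while the iterate stays a fixed distance $\varepsilon$ from $C$. In finite dimensions this is immediate from compactness, but it quietly encodes a regularity property of the family $\{C_i\}$ (the distance to the intersection being controlled by the individual distances) that can fail in infinite dimensions. This is the point at which $\dim\HH<\infty$ is genuinely used, consistent with the paper confining its main results to the finite-dimensional setting.
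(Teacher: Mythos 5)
Your proposal is correct and rests on the same ingredients as the paper's proof: the descent estimate of Lemma~\ref{le:prop-ppa}(iv) together with finite-dimensional compactness and the fact that $\sum_j\beta_j d_{C_j}^2$ vanishes only on $C$. The paper reaches the same conclusion by contradiction (extracting a convergent subsequence along which the descent fails and passing to the limit), whereas you make the compactness step explicit as a uniform error bound $\gamma>0$ and then absorb the $O(\lambda_k)$ correction directly; this is a reorganization rather than a different argument, and your caveat that the inequality is obtained only for large $k$ matches what the paper's own proof actually delivers and what is needed downstream.
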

\begin{proof}
Suppose not; then for some $\varepsilon_0>0$, we have a subsequence $\{x_{k_l}\}$ of $\{x_k\}$
such that $d_C(x_{k_l})\ge\varepsilon_0$ and
\begin{equation}
d_C^2(x_{k_l+1})> d_C^2(x_{k_l})-\frac1{l}
\end{equation}
for all $l\ge 1$. It turns out from Lemma \ref{le:prop-ppa}(iv) that
\begin{equation}\label{sum-j}
\sum_{j=1}^M\beta_j d_{C_j}^2(x_{k_l})\le d_C^2(x_{k_l})-d_C^2(x_{k_l+1})+O(\lambda_{k_l})<\frac1{l}+O(\lambda_{k_l})\to 0\
({\rm as}\   l\to\infty).
\end{equation}
Since $\{x_k\}$ is a bounded sequence in a finite dimensional space, we may assume that $x_{k_l}\to\hat{x}$.
We then get
\begin{equation}\label{s}
\sum_{j=1}^M\beta_j d_{C_j}^2(\hat{x})=0.
\end{equation}
This implies that $\hat{x}\in C_j$ for every $j$; hence, $\hat{x}\in C$.
This contradicts the fact that $d_C(\hat{x})=\lim_{l\to\infty}d_C(x_{k_l})\ge\varepsilon_0>0$.
\end{proof}

We are now ready to prove Lemma~\ref{le:ppa2}.

\begin{proof}[Proof of Lemma~\ref{le:ppa2}]
By Lemma~\ref{le:ppa1} we have $\liminf_{k\to\infty} d_C(x_k)=0$, hence, we can take
$k_0$ such that $d_C(x_{k_0})<\varepsilon$ and
$\lambda_{k}L<\frac12\varepsilon$ for all $k\ge k_0$.
Let $k\ge k_0$. Consider two cases.

Case 1: $d_C(x_k)<\varepsilon$. In this case, we have by Lemma \ref{le:prop-ppa}(iii)
\begin{equation}\label{d-Cx}
d_C(x_{k+1})\le d_C(x_{k,N})\le d_C(x_k)+\|x_k-x_{k,N}\|\le d_C(x_k)+\lambda_kL<\frac32\varepsilon.
\end{equation}

Case 2: $d_C(x_k)\ge\varepsilon$. Using Lemma \ref{le:ppa1}, we obtain
$d_C(x_{k+1})<d_C(x_{k})$.

We now prove, for all $i\ge 0$,
\begin{equation}\label{dCx}
d_C(x_{k_0+i})<\frac32\varepsilon.
\end{equation}
Indeed, (\ref{dCx}) is trivial when $i=0$. Assume (\ref{dCx}) holds for $i$.
If $d_C(x_{k_0+i})\ge\varepsilon$, then $d_C(x_{k_0+i+1})<d_C(x_{k_0+i})<\frac32\varepsilon$;
if $d_C(x_{k_0+i})<\varepsilon$, then, by Case 1, we get $d_C(x_{k_0+i+1})<\frac32\varepsilon$.
Hence, (\ref{dCx}) also holds for $i+1$.

Now it turns out from (\ref{dCx}) that
$\limsup_{k\to\infty} d_C(x_k)\le\frac32\varepsilon$, and Lemma~\ref{le:ppa2} is proven.
\end{proof}

Note that Lemmas~\ref{le:ppa1} and \ref{le:ppa2} can be generalized for the infinite-dimensional setting. We discuss this in more detail in Section~\ref{ssec:inf}.

\begin{remark}
We include a version of Lemma \ref{le:prop-ppa} for the sequential projection algorithm (SPA) that generates a sequence $\{x_k\}$ 
via the following iteration process: 
\begin{equation}
\begin{cases}\tag{SPA}
x_{k,0}=x_k, \\
x_{k,j}=x_{k,j-1}-\lambda_k v_{k,j}, \  v_{k,j} \in \partial f_{j}(x_{k,j-1}),\   j = 1,2,\cdots, N,\\
x_{k+1}=P_{C_M}\cdots P_{C_1}x_{k,N}.
\end{cases}
\end{equation}
\begin{lemma}\label{le:prop-spa}
Let $\{x_k\}$ be generated by (SPA) and assume
that the Lipschitz condition (\ref{eq:Lip}) is satisfied. Then
\begin{enumerate}
\item[(i)]
$\|x_{k,N}-x_k\|\le L\lambda_k$, where $L=\sum_{j=1}^N L_j$.
\item[(ii)]
$\|x_{k+1}-z\|^2\le \|x_{k,N}-z\|^2-\sum_{j=1}^M d_{C_j}^2(P_{C_{j-1}}\cdots P_{C_1}x_{k,N})$ for $z\in C$.
\item[(iii)]
$d_{C}^2(x_{k+1})\le d_{C}^2(x_{k,N})-\sum_{j=1}^M d_{C_j}^2(P_{C_{j-1}}\cdots P_{C_1}x_{k,N})$.
\item[(iv)]
$d_{C}^2(x_{k+1})\le d_{C}^2(x_{k})-\sum_{j=1}^M d_{C_j}^2(P_{C_{j-1}}\cdots P_{C_1}x_{k})+\mu_k$.
\end{enumerate}
Here $\mu_k:=2\lambda_k L(\lambda_k L+d_C(x_k)+\sum_{j=1}^M d_{C_j}^2(P_{C_{j-1}}\cdots P_{C_1}x_{k}))$
and we use the convention $P_{C_0}P_{C_1}=I$. Note that $\mu_k=O(\lambda_k)\to 0$ as $k\to\infty$.

\end{lemma}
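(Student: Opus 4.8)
The plan is to mirror the template of Lemma~\ref{le:prop-ppa}, making exactly one structural change: the single-step estimate for the convex combination of projectors is replaced by a telescoping estimate along the cascade $P_{C_1},P_{C_2},\dots,P_{C_M}$. Part (i) is literally identical to Lemma~\ref{le:prop-ppa}(i), since the inner subgradient cycle is the same in both algorithms; applying the triangle inequality to $x_{k,j}=x_{k,j-1}-\lambda_k v_{k,j}$ and using $\|v_{k,j}\|\le L_j$ gives $\|x_{k,N}-x_k\|\le\sum_{j=1}^N\lambda_k L_j=L\lambda_k$.

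For part (ii) I would set $y_0:=x_{k,N}$ and $y_j:=P_{C_j}y_{j-1}$ for $j=1,\dots,M$, so that $y_M=x_{k+1}$ and $y_{j-1}=P_{C_{j-1}}\cdots P_{C_1}x_{k,N}$ (with the convention that the initial composition is the identity, which is what $P_{C_0}P_{C_1}=I$ is meant to encode). Fixing $z\in C$ and noting $z\in C_j$ for every $j$, Proposition~\ref{prop:proj}(iii) applied to $C_j$ at the point $y_{j-1}$ yields $\|y_j-z\|^2\le\|y_{j-1}-z\|^2-\|y_{j-1}-P_{C_j}y_{j-1}\|^2=\|y_{j-1}-z\|^2-d_{C_j}^2(y_{j-1})$. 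Summing these inequalities telescopes the $\|y_j-z\|^2$ terms and produces exactly the claimed bound. This telescoping is the one genuinely new ingredient relative to the parallel case, where convexity of $\|\cdot\|^2$ collapsed the entire projection step into a single application of Proposition~\ref{prop:proj}(iii).

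Part (iii) then follows from (ii) by the same device as before: taking $z=P_C(x_{k,N})\in C$ makes $\|x_{k,N}-z\|^2=d_C^2(x_{k,N})$, while $d_C^2(x_{k+1})\le\|x_{k+1}-z\|^2$ by definition of the distance. The delicate step, and the one I expect to be the main obstacle, is (iv), where the distance terms evaluated along the cascade of partial projections of $x_{k,N}$ must be transferred back to the same cascade applied to $x_k$. Here I would combine three facts: part (i) ($\|x_{k,N}-x_k\|\le L\lambda_k$), the nonexpansiveness of each partial composition $P_{C_{j-1}}\cdots P_{C_1}$ from Proposition~\ref{prop:proj}(ii), and the $1$-Lipschitz continuity of $d_C$ and of each $d_{C_j}$. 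Nonexpansiveness gives $\|P_{C_{j-1}}\cdots P_{C_1}x_{k,N}-P_{C_{j-1}}\cdots P_{C_1}x_k\|\le L\lambda_k$, so each $d_{C_j}$ evaluated at the two cascades differs by at most $L\lambda_k$, and likewise $|d_C(x_{k,N})-d_C(x_k)|\le L\lambda_k$. Writing each squared-distance difference as $(a-b)(a+b)$ with $|a-b|\le L\lambda_k$ and bounding the remaining factor by its value at $x_k$ plus $L\lambda_k$ turns every discrepancy into a term of order $\lambda_k$; collecting these (using an elementary estimate such as $t\le t^2+\frac{1}{4}$ to recast the first-order distance terms into the squared distances appearing in the final expression) yields the stated $\mu_k$, which is $O(\lambda_k)$ because $\{x_k\}$ is bounded and $\lambda_k\to0$. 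The care required is entirely in this bookkeeping, since the arguments of the distance functions are intermediate projected points rather than the iterate $x_k$ itself.
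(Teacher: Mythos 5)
Your argument is correct and is exactly the route the paper takes: the paper's own proof of Lemma~\ref{le:prop-spa} is only a two-line remark saying that (ii) follows by consecutively applying Proposition~\ref{prop:proj}(iii) (your telescoping along $y_j=P_{C_j}y_{j-1}$) and that (iv) follows from (iii) via the $1$-Lipschitz property of distance functions, which is precisely your combination of part (i), nonexpansiveness of the partial compositions, and $|a^2-b^2|\le|a-b|(a+b)$. One small caution on (iv): the clean bookkeeping gives $d_C^2(x_{k+1})\le d_C^2(x_k)-\sum_j d_{C_j}^2(P_{C_{j-1}}\cdots P_{C_1}x_k)+2\lambda_k L\bigl(d_C(x_k)+\sum_j d_{C_j}(P_{C_{j-1}}\cdots P_{C_1}x_k)\bigr)+(M+1)\lambda_k^2L^2$, with \emph{first powers} of the distances in the error term; the squared distances inside the stated $\mu_k$ appear to be a typo, and your $t\le t^2+\tfrac14$ device does not literally recover the printed $\mu_k$ either (it leaves an extra additive $O(\lambda_k)$ constant), so you should simply state your own error term rather than force a match --- the only property used downstream is $\mu_k=O(\lambda_k)$, which both versions satisfy under the boundedness assumption.
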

The proof of Lemma \ref{le:prop-spa} follows the same line of the proof of Lemma \ref{le:prop-ppa}.
For instance, part (ii) can be proved by consecutively applying property (iii) of projections in
Proposition \ref{prop:proj} (it is also proved in \cite{DN2009}).
Part (iv) can trivially be derived from (iii) by using the
Lipschitz-1 property of distance functions.

By Lemma \ref{le:prop-spa}, we find that the conclusion of Lemma \ref{le:ppa1}
holds true also for the SPA. 

\end{remark}

\subsection{Asymptotic Feasibility for Cyclic Projections}
Recall that the \emph{cyclic projection algorithm} (CPA) alternates the full sequence of gradient steps with the individual
projections on each one of the sets $C_1$, \dots, $C_N$, as follows.
\begin{equation}
\begin{cases}\tag{CPA}
x_{k,0}= x_k,\\
x_{k,j}=x_{k,j-1}-\lambda_k v_{k,j}, \quad v_{k,j} \in \partial f_{j}(x_{k,j-1}) ,\quad  j = 1,2,\cdots, N,\\
x_{k+1} = P_{C_{[k+1]}} x_{k,N},\quad  [k+1] =(k\mod M)+1.
\end{cases}
\end{equation}

Our goal is to prove the following asymptotic feasibility result that mirrors Lemma~\ref{le:ppa2}.

\begin{lemma}\label{lem:conv}
	Assume ${\rm dim}\, \HH <\infty$, (\ref{eq:Lip}), and $\lambda_k\downarrow 0$, and  that the sequence $\{x_k\}$ generated by the method of cyclic projections is bounded. Then $\{x_k\}$ is asymptotically feasible, that is, $\lim_{k\to\infty} d_C(x_k)=0$.
\end{lemma}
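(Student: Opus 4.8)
The plan is to reproduce the two–stage structure of the parallel case (Lemmas~\ref{le:ppa1} and~\ref{le:ppa2}): first prove $\liminf_{k\to\infty} d_C(x_k)=0$, then bootstrap to the full limit. The essential new difficulty is that one iteration of (CPA) projects onto a single set $C_{[k+1]}$, so the elementary inequality $d_C^2(x_{k+1})\le d_C^2(x_{k,N})-d_{C_{[k+1]}}^2(x_{k,N})$ (which follows from Proposition~\ref{prop:proj}(iii) as in Lemma~\ref{le:prop-ppa}(ii)--(iii), using $C\subseteq C_{[k+1]}$) controls the distance to only \emph{one} constraint set. A single step therefore need not decrease $d_C$ at all, so the one-point compactness argument of Lemma~\ref{le:ppa1} is unavailable, and I must instead work over a full cycle of $M$ consecutive iterations, during which (CPA) visits each of $C_1,\dots,C_M$ exactly once. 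Two elementary estimates survive unchanged and will be used throughout: $\|x_{k,N}-x_k\|\le L\lambda_k$ (as in Lemma~\ref{le:prop-ppa}(i)), and the monotonicity $d_C(x_{k+1})=d_C(P_{C_{[k+1]}}x_{k,N})\le d_C(x_{k,N})\le d_C(x_k)+L\lambda_k$, which combines the fact that projecting onto a superset $C_{[k+1]}\supseteq C$ does not increase the distance to $C$ with the $1$-Lipschitz property of $d_C$.

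The heart of the argument is to compare one cycle of (CPA) with one cycle of \emph{pure} cyclic POCS from the same starting point. Fix $k$ and set $T:=P_{C_{[k+M]}}\cdots P_{C_{[k+1]}}$; there are only $M$ such operators, one per residue of $k$ modulo $M$, each cycling through all the sets. Writing $b_0=x_k$, $b_s=P_{C_{[k+s]}}b_{s-1}$ for the pure POCS iterates and using that each projection is nonexpansive while each gradient sweep displaces the iterate by at most $L\lambda_{k+s}$, a short induction yields $\|x_{k+M}-Tx_k\|\le ML\lambda_k$, so up to an $O(\lambda_k)$ error a cycle of (CPA) behaves like the single \emph{fixed} map $T$. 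For pure POCS I would show that the continuous function $g(x):=d_C^2(x)-d_C^2(Tx)$ is nonnegative and vanishes only on $C$: telescoping Proposition~\ref{prop:proj}(iii) along the intermediate points $z_0=x$, $z_j=P_{C_{[k+j]}}z_{j-1}$ against the common reference point $P_Cx\in C$ gives $d_C^2(Tx)\le d_C^2(x)-\sum_{j=1}^M\|z_{j-1}-z_j\|^2$, and chasing the equality case forces $z_0=\cdots=z_M$ and hence $x\in\bigcap_i C_i=C$. Thus $\Fix(T)=C$; since the iterates lie in a fixed ball of radius $R$ by boundedness, compactness furnishes for each $\varepsilon>0$ a uniform constant $\delta_0=\delta_0(\varepsilon)>0$ with $g(x)\ge\delta_0$ on the shell $\{\varepsilon\le d_C(x)\le R\}$, taken as the minimum over the finitely many operators $T$.

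Combining these ingredients, whenever $d_C(x_k)\ge\varepsilon$ and $k$ is large enough that the $O(\lambda_k)$ comparison error falls below $\tfrac12\delta_0$, I obtain the \emph{cycle} decrease $d_C^2(x_{k+M})\le d_C^2(x_k)-\tfrac12\delta_0$. This replaces Lemma~\ref{le:ppa1}: if $\liminf_k d_C(x_k)=c>0$, then $d_C(x_k)\ge c/2$ for all large $k$, and iterating the cycle decrease along $k,k+M,k+2M,\dots$ drives $d_C^2$ to $-\infty$, a contradiction; hence $\liminf_k d_C(x_k)=0$. I would then bootstrap to $\lim_k d_C(x_k)=0$ in the spirit of Lemma~\ref{le:ppa2}, with one adjustment reflecting that we only control decreases over whole cycles: fixing $\varepsilon>0$, I choose $k_0$ with $d_C(x_{k_0})<\varepsilon$ and $ML\lambda_{k_0}<\tfrac12\varepsilon$, and track the endpoint values $d_C(x_{k_0+mM})$. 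When an endpoint exceeds $\varepsilon$ the cycle decrease strictly lowers it; otherwise the per-step bound $d_C(x_{k+1})\le d_C(x_k)+L\lambda_k$ shows a full cycle raises $d_C$ by at most $\tfrac12\varepsilon$. An induction then keeps all endpoints below $\tfrac32\varepsilon$ and, applying the per-step bound within each cycle, all intermediate iterates below $2\varepsilon$, so $\limsup_k d_C(x_k)\le 2\varepsilon$; letting $\varepsilon\downarrow 0$ completes the proof.

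The step I expect to be the main obstacle is precisely this passage from single-set to all-sets control. The device that resolves it is the nonexpansive comparison of the perturbed cycle to the fixed pure-POCS cycle, which confines the gradient perturbation to an $O(\lambda_k)$ error, together with the finite-dimensional compactness that upgrades the pointwise strict decrease $g(x)>0$ for $x\notin C$ into a uniform lower bound $\delta_0$ over the shell; this is the ``entirely different idea'' needed here, in contrast with the simultaneous single-point estimate that makes the parallel case elementary.
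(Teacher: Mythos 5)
Your proof is correct and follows the same overall architecture as the paper's: compare one perturbed (CPA) cycle with one exact cycle of pure cyclic projections started from the same point (your estimate $\|x_{k+M}-Tx_k\|\le ML\lambda_k$ is exactly Proposition~\ref{prop:tiny-lambda}), establish a uniform strict decrease of $d_C$ under the exact cycle away from $C$, deduce $\liminf_{k\to\infty} d_C(x_k)=0$, and then bootstrap to the full limit by induction over cycle endpoints. Where you genuinely diverge is in how the uniform decrease is obtained. The paper introduces the one-dimensional function $\psi^q_X(\alpha)=\sup\{d(P_q(x),C): x\in X,\ d(x,C)\le\alpha\}$ and must prove both that $\psi^q_X(\alpha)<\alpha$ for $\alpha>0$ and that $\psi^q_X$ is continuous --- the latter being the most delicate part of Proposition~\ref{prop:psi}, requiring monotonicity, one-sided limits and a line-segment construction inside $X$. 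You instead minimize the manifestly continuous function $g(x)=d_C^2(x)-d_C^2(Tx)$ directly over the compact shell $\{x:\varepsilon\le d_C(x)\le R\}$ intersected with a ball containing the iterates; positivity of $g$ off $C$ follows from the same telescoped Fej\'er inequality and its equality case, and compactness immediately yields the uniform constant $\delta_0>0$ (taking the minimum over the $M$ possible cycle operators, as you note). This buys a shorter and more elementary proof of the key lemma, at no loss of generality since both arguments are confined to finite dimensions anyway. Your bootstrap is also slightly more explicit than the paper's in that you separately control the intermediate iterates within each cycle via the per-step bound $d_C(x_{k+1})\le d_C(x_k)+L\lambda_k$, arriving at $\limsup_k d_C(x_k)\le 2\varepsilon$ rather than leaving the within-cycle behaviour implicit. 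I see no gaps.
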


To prove this lemma, we need several technical claims. First, for any $x\in X $ and $q\in \{1,\dots, M\}$ define the exact $q$-cyclic projection
\begin{equation}\label{eq:defspecialP}
P_q(x): = P_{C_q} P_{C_{q-1}}\cdots P_{C_1}P_{C_M}P_{C_{M-1}}\cdots P_{C_{q+1}}(x).
\end{equation}

We next show that such cyclic projections bring the iterations closer to the feasible set in a uniform sense.

\begin{proposition}\label{prop:psi}
	Let $X$ be a nonempty compact convex subset of $\RR^n$ such that $X\setminus C \neq \emptyset$ and $X\cap C\neq \emptyset$. For each $q\in \{1,\dots, M\}$ define a function $\psi^q_X:[0,+\infty)\to [0,+\infty)$,
	\begin{equation}\label{eq:defpsi}
	\psi^q_X(\alpha):= \sup_{\substack{d(x,C)\leq \alpha\\x\in X}} d(P_q(x),C).
	\end{equation}
	The function $\psi^q_X$ is continuous and
	$\psi^q_X(\alpha)<\alpha$ for all $\alpha>0$.
\end{proposition}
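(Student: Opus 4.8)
The plan is to separate the two assertions—strict contraction $\psi^q_X(\alpha)<\alpha$ and continuity—since they rely on different features of the hypotheses. Throughout I would write $g(x):=d_C(P_q(x))$ and $K_\alpha:=\{x\in X:\, d_C(x)\le \alpha\}$, so that $\psi^q_X(\alpha)=\sup_{x\in K_\alpha} g(x)$. Note $K_\alpha$ is a closed subset of the compact set $X$, hence compact, and nonempty because $X\cap C\ne\emptyset$; also $g$ is continuous, being a composition of the (nonexpansive, hence continuous) projectors constituting $P_q$ with the $1$-Lipschitz map $d_C$. Two basic observations drive everything. First, every $z\in C$ lies in each $C_i$, so $P_{C_i}z=z$ and thus $P_q$ fixes $C$ pointwise. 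Second, $P_q$ is nonexpansive. Together these give $d_C(P_q(x))\le\|P_q(x)-P_q(z)\|\le\|x-z\|$ for each $z\in C$, hence $g(x)\le d_C(x)$, so $\psi^q_X$ is nondecreasing and $\psi^q_X(\alpha)\le\alpha$.

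For the strict inequality I would first establish a pointwise strict decrease off $C$. Writing the intermediate iterates $u_0=x,\ u_1=P_{C_{q+1}}u_0,\dots,u_M=P_q(x)$ and applying Proposition~\ref{prop:proj}(iii) successively with the fixed target $z=P_Cx\in C$ telescopes to
\[
g(x)^2\le \|u_M-z\|^2\le d_C(x)^2-\sum_{i=1}^M\|u_i-u_{i-1}\|^2.
\]
The sum vanishes exactly when every projection leaves the point fixed, i.e. when $x\in C_i$ for all $i$, that is $x\in C$; hence $x\notin C$ implies $g(x)<d_C(x)$. Now fix $\alpha>0$. By compactness of $K_\alpha$ and continuity of $g$, the supremum defining $\psi^q_X(\alpha)$ is attained at some $x^*\in K_\alpha$. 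If $x^*\in C$ then $\psi^q_X(\alpha)=g(x^*)=0<\alpha$; otherwise $\psi^q_X(\alpha)=g(x^*)<d_C(x^*)\le\alpha$. Either way $\psi^q_X(\alpha)<\alpha$.

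Continuity I would split into right- and left-continuity of the monotone $\psi^q_X$. Right-continuity is a routine compactness argument: for $\alpha_n\downarrow\alpha_0$ pick maximizers $x_n\in K_{\alpha_n}$, extract a convergent subsequence $x_n\to\bar x$, and note $d_C(\bar x)\le\alpha_0$, so $\bar x\in K_{\alpha_0}$ and $\lim\psi^q_X(\alpha_n)=g(\bar x)\le\psi^q_X(\alpha_0)$, the reverse inequality being monotonicity. The delicate direction is left-continuity, and this is precisely where convexity of $X$ and $X\cap C\ne\emptyset$ enter. Let $x^*$ be a maximizer for $\alpha_0$ and fix any $z_0\in X\cap C$. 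Along the segment $x_t:=(1-t)z_0+tx^*\in X$, convexity of the distance function $d_C$ gives $d_C(x_t)\le t\,d_C(x^*)\le t\alpha_0<\alpha_0$ for $t<1$, so $x_t\in K_{t\alpha_0}$; and $g(x_t)\to g(x^*)=\psi^q_X(\alpha_0)$ as $t\to1$ by continuity of $g$. Thus for $\alpha_n\uparrow\alpha_0$ and any $\varepsilon>0$ I can choose $t<1$ with $g(x_t)>\psi^q_X(\alpha_0)-\varepsilon$, and then every $\alpha_n\ge t\alpha_0$ satisfies $\psi^q_X(\alpha_n)\ge g(x_t)>\psi^q_X(\alpha_0)-\varepsilon$, forcing $\lim\psi^q_X(\alpha_n)\ge\psi^q_X(\alpha_0)$.

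The main obstacle is the left-continuity step: a monotone function may jump upward, and the only way to exclude this at a value $\alpha_0$ whose maximizer $x^*$ sits exactly on the level set $\{d_C=\alpha_0\}$ is to approximate $x^*$ from inside $K_{\alpha_0}$ while keeping $g$ under control. The convex combination with a feasible point $z_0\in X\cap C$ does exactly this, so the two geometric hypotheses—$X$ convex and $X\cap C\ne\emptyset$—are used in an essential way here (and the latter also guarantees $K_\alpha\ne\emptyset$, without which $\psi^q_X$ would be ill-defined).
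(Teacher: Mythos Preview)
Your proof is correct and follows the same overall architecture as the paper's: strict inequality via the telescoping estimate from Proposition~\ref{prop:proj}(iii) plus attainment of the supremum on the compact $K_\alpha$, and continuity via monotonicity plus the two one-sided limit inequalities. One small presentational slip: the clause ``so $\psi^q_X$ is nondecreasing'' does not follow from $g(x)\le d_C(x)$; it follows from $K_\alpha\subseteq K_\beta$ for $\alpha\le\beta$, which you clearly use later anyway.

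The one substantive difference is in the left-continuity step. The paper fixes a maximizer $\bar x$ and a feasible $x_0\in X\cap C$, then argues that $t\mapsto d_C(x_0+t(\bar x-x_0))$ is \emph{strictly increasing} on $[t_0,1]$ (where $t_0$ is the last parameter at which the distance vanishes), via an explicit computation comparing $d_C$ at two parameters $t_1<t_2$. You bypass this entirely by invoking convexity of $d_C$: since $d_C(z_0)=0$, the convex combination bound $d_C(x_t)\le t\,d_C(x^*)\le t\alpha_0$ immediately places $x_t$ in $K_{t\alpha_0}\subseteq K_\alpha$ for any $\alpha\ge t\alpha_0$. This is cleaner and shorter; the paper's strict-monotonicity detour is not needed once one remembers that the distance to a convex set is itself convex.
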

\begin{proof} We assume throughout that the compact convex set $X$ and the index $q\in \{1,\dots, M\}$ are fixed and use the notation $\psi: =\psi^q_X $. We first show that $\psi(\alpha)<\alpha$ for $\alpha>0$. For any closed convex set $S$ we have by Proposition~\ref{prop:proj}(iii)
	$$
	\|x-y\|^2 \geq \|P_S(x)-y\|^2+\|P_S(x)-x\|^2,
	$$
	hence, for our setting
	\begin{align*}
	\|x-y\|^2
	& \geq \|P_{C_{q+1}}(x)-y\|^2+\|P_{C_{q+1}}(x)-x\|^2\\
	& \geq \|P_{C_{q+2}}P_{C_{q+1}}(x)-y\|^2+\|P_{C_{q+2}}P_{C_{q+1}}(x)-P_{C_{q+1}}(x)\|^2+\|P_{C_{q+1}}(x)-x\|^2\\
	& \geq \cdots \\
	& \geq \|P_{C_q}\cdots P_{C_{q+2}}P_{C_{q+1}}(x)-y\|^2+\cdots + \|P_{C_2}P_{C_1}(x)-P_{C_1}(x)\|^2+\|P_{C_1}(x)-x\|^2.
	\end{align*}
	It is evident then that if $x\notin C = \displaystyle\cap_{i=1}^M C_i$, we  have
	\begin{equation*}
	\|x-y\|^2 \geq \|P_q(x)-y\|^2+\gamma(x)\quad \forall y\in C,
	\end{equation*}
	where $\gamma(x)>0$  does not depend on $y$.
	Therefore, taking the infimum over $y\in C$, we have for every $x\notin C$
	\begin{align}\label{eq:ineq-proj}
	d^2(x,C )  & = \inf_{y\in C}\|x-y\|^2 \notag\\
	& \geq  \inf_{y\in C}\|P_q(x)-y\|^2+\gamma(x) \notag\\
	& = d^2 (P_q(x),C)+\gamma(x),
	\end{align}
	and so
	\begin{equation}\label{eq:dist-rel}
	d(x,C )>d (P_q(x),C)\; \text { for every  } \; x\notin C.
	\end{equation}
	Now let
	$$
	X_\alpha:= X\cap \{x\,|\, d(x,C)\leq \alpha\}.
	$$
	Observe that explicitly
	\begin{equation}\label{eq:psixalpha}
	\psi(\alpha) = \sup_{x\in X_\alpha} d(P_q(x), C).
	\end{equation}
	The set $X_\alpha$ is compact because it is the intersection of a compact set $X$ with a closed set $\{d(x,C)\leq \alpha\}$, and $X_\alpha$ is nonempty for every $\alpha\geq 0$ because $\emptyset \neq C\cap X= X_0\subset X_\alpha$.
	The function $d(P_q(x),C)$ is continuous in $x$, and since each of the sets $X_\alpha$ is compact and nonempty, the supremum in \eqref{eq:psixalpha} is attained, and we have
	\begin{equation}\label{eq:psimax}
	\psi(\alpha) = \max_{x\in X_\alpha} d(P_q(x), C) \quad \forall \, \alpha\geq 0.
	\end{equation}
	Hence, for every $\alpha> 0$ there exists $x_\alpha$ such that $d(x_\alpha,C)\leq \alpha$ and
	$$
	\psi(\alpha) =  d(P_q(x_\alpha), C).
	$$
	If $\psi(\alpha)=0$, then $\psi(\alpha)<\alpha$. If $\psi(\alpha)>0$, we have $x\notin C$ and from \eqref{eq:dist-rel}
	$$
	\psi(\alpha) =  d(P_q(x_\alpha), C)<d(x,C) \leq \alpha.
	$$

	We next focus on showing that $\psi$ is continuous. Since
	$$
	X_\alpha\subseteq X_\beta \quad \text{for } 0\leq  \alpha\leq \beta,
	$$
	the function $\psi(\alpha)$ is nondecreasing, and to prove its continuity it is sufficient to show
	\begin{equation}\label{eq:onesidedlimsinv}
	\liminf_{\alpha\uparrow \bar \alpha } \psi(\alpha)\geq \psi(\bar \alpha), \; \forall \bar \alpha>0, \quad \text{and} \quad \limsup_{\alpha\downarrow \bar \alpha } \psi(\alpha)\leq \psi(\bar \alpha), \; \forall \bar \alpha\geq 0.
	\end{equation}
	
	If $\psi(\bar \alpha)=0$, since $\psi$ is nondecreasing, we have $0\leq \psi(\alpha) \leq \psi(\bar \alpha) = 0$, so $\psi(\alpha)=0$ for all $\alpha\in [0,\bar \alpha]$ and the first relation in \eqref{eq:onesidedlimsinv} holds trivially. Consider the case $\psi(\bar \alpha)>0$. From \eqref{eq:psimax} we know that there exists $\bar x\in X$ such that $d(\bar x,C) \leq \bar \alpha$ and $d(P_q(\bar x),C)  = \psi(\bar \alpha)$. Let $x_0\in X_0\neq \emptyset$ (so that $d(x_0,C) =0$). Since $X$ is convex, we have $[x_0, \bar x] \subseteq X$. Let
	$$
	t_0 := \sup\{t\in [0,1]\,|\, d( x_0+ t (\bar x-x_0)), C) = 0\}.
	$$
	Since by our assumption $\psi(\bar \alpha)>0$, we have  $t_0\in [0,1)$. Now take any $t_0\leq t_1<t_2\leq 1$. We have
	\begin{align*}
	d( x_0+ t_1 (\bar x-x_0)), C)
	& \leq \| [x_0+ t_1 (\bar x-x_0))]-[x_0+\frac{t_1}{t_2} (P_C(x_0+ t_2 (\bar x-x_0)))-x_0))]\|\\
	&  = \frac{t_1}{t_2}\|x_0+ t_2 (\bar x-x_0))-(P_C(x_0+ t_2 (\bar x-x_0)))\|\\
	& =\frac{t_1}{t_2} d(x_0+ t_2 (\bar x-x_0),C)\\
	& <  d(x_0+ t_2 (\bar x-x_0),C),
	\end{align*}
	and hence $ d(x_0+ t (\bar x-x_0),C)$ is strictly increasing in $t$ for $t\in [t_0, 1]$. From this together with the continuity of the distance function we deduce that for every $t\in [0,1)$ there exists a sufficiently large $\alpha_t<\bar \alpha$ such that
	$$
	d(x_0+ t' (\bar x-x_0),C)\leq \alpha_t <\bar \alpha \quad \forall t'\in [0,t].
	$$
	
	At the same time, by the continuity of $P_q(x)$ for every $\varepsilon>0$ there exists $t\in [0,1)$ such that
	$$
	d(P_q(x_0+ t (\bar x-x_0)),C)\geq d(P_q(\bar x), C) -\varepsilon.
	$$
	This means that for every $\varepsilon>0 $ we can find $t$ and $\alpha_t$ such that
	$$
	\psi(\alpha)\geq \psi(\alpha_t)\geq d(P_q(\bar x),C)-\varepsilon \quad \forall \alpha\geq \alpha_t,
	$$
	and therefore we have the desired
	$$
	\liminf_{\alpha\uparrow \bar \alpha} \psi(\alpha)\geq \psi (\bar \alpha).
	$$
	
	It remains to show the second relation in \eqref{eq:onesidedlimsinv}. Now let $\alpha_k$ be such that $\alpha_k\downarrow \bar \alpha\geq 0$ as $k\to \infty$, and
	$$
	\lim_{k\to \infty}\psi(\alpha_k) = \limsup_{\alpha\downarrow \bar \alpha} \psi(\alpha).
	$$
	From \eqref{eq:psimax} there exists a sequence $x_k$ such that
	$$
	d(x_k,C) \leq \alpha_k,\quad \psi(\alpha_k) = d(P_q(x_k),C).
	$$
	Without loss of generality this sequence $\{x_k\}$ converges to some $\bar x\in X$. By continuity we have
	$$
	d(P_q(x_k), C) \to d(P_q(\bar x), C); \quad d(\bar x, C) = \lim_{k\to \infty}d(x_k,C)\leq \bar \alpha.
	$$
	Therefore
	$$
	\lim_{k\to \infty} \psi(\alpha_k) = d(P_q(x_k),C) = d(P_q(\bar x), C)\leq \psi(\bar \alpha).
	$$
\end{proof}

\begin{proposition}\label{prop:tiny-lambda}
Let $\{x_k\}$ be a bounded sequence obtained by means of the cyclic projections algorithm, under assumption \eqref{eq:Lip}, and $\lambda_k\downarrow 0$. Then for any $q\in \{1,\dots, M\}$ and any $\varepsilon>0$ there exists a sufficiently large $K$ such that
	$$
	\|P_q(x_k)-x_{k+M}\|\leq \varepsilon \quad \forall k\geq K, \quad (k \mod M)+1 = q,
	$$
	where $P_q$ is the exact cyclic projection operator defined by \eqref{eq:defspecialP}.
\end{proposition}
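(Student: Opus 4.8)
The plan is to compare the genuine (CPA) trajectory over $M$ consecutive outer iterations, started at $x_k$, with the \emph{exact} trajectory obtained by applying the same sequence of $M$ projectors but suppressing the intervening subgradient inner loops. Because $\lambda_k\downarrow 0$ and projections are nonexpansive, the discrepancy between the two trajectories should accumulate only additively and be of order $O(\lambda_k)$, hence fall below $\varepsilon$ for all large $k$.

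First I would record the estimate on a single inner gradient cycle. The inner loop is identical for all three variants of the method, so exactly as in Lemma~\ref{le:prop-ppa}(i) the subgradient bound \eqref{eq:Lip} gives
$$
\|x_{m,N}-x_m\|\le \sum_{j=1}^N \lambda_m\|v_{m,j}\|\le L\lambda_m
$$
for every outer index $m$. Next I would introduce two parallel sequences indexed by $i=0,1,\dots,M$: the actual iterates $x_{k+i}$, and the \emph{exact-cycle} iterates defined by $y_0:=x_k$ and $y_{i+1}:=P_{C_{[k+i+1]}}(y_i)$, where the projection indices $[k+i+1]=((k+i)\bmod M)+1$ are exactly those used by (CPA). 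Under the alignment $(k\bmod M)+1=q$ these indices sweep once through all $M$ sets, so that $y_M$ is the composition of the $M$ projectors of one outer cycle applied to $x_k$; by the definition \eqref{eq:defspecialP} this composition is the exact cyclic projection $P_q(x_k)$.

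The central step is an inductive estimate of the error $e_i:=\|x_{k+i}-y_i\|$. Writing $x_{k+i+1}=P_{C_{[k+i+1]}}(x_{k+i,N})$ and applying nonexpansiveness (Proposition~\ref{prop:proj}(ii)) together with the inner-loop bound above,
$$
e_{i+1}=\bigl\|P_{C_{[k+i+1]}}(x_{k+i,N})-P_{C_{[k+i+1]}}(y_i)\bigr\|\le \|x_{k+i,N}-y_i\|\le \|x_{k+i,N}-x_{k+i}\|+e_i\le L\lambda_{k+i}+e_i .
$$
Since $e_0=0$, telescoping gives $e_M\le L\sum_{i=0}^{M-1}\lambda_{k+i}\le ML\lambda_k$, where the last inequality uses the monotonicity $\lambda_k\downarrow 0$. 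As $M$ and $L$ are fixed and $\lambda_k\to 0$, the right-hand side tends to $0$, so choosing $K$ with $ML\lambda_k\le\varepsilon$ for all $k\ge K$ yields $\|P_q(x_k)-x_{k+M}\|=e_M\le\varepsilon$, as claimed.

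I do not expect a deep difficulty: the argument is essentially a telescoped application of nonexpansiveness. The only point demanding genuine care is the index bookkeeping in the second paragraph, namely verifying that the $M$ projection indices $[k+1],\dots,[k+M]$ produced by (CPA) over one outer cycle, under $(k\bmod M)+1=q$, compose in the correct order to the operator $P_q$ of \eqref{eq:defspecialP}, including the wrap-around modulo $M$. The remaining routine verification is that \eqref{eq:Lip} is legitimately invoked at every intermediate iterate $x_{k+i,j-1}$, which is guaranteed by the standing hypothesis since these points are themselves iterates of the chosen method.
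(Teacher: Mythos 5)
Your proof is correct and takes essentially the same route as the paper: the paper unrolls the very same comparison between the true trajectory and the exact projection cycle as a backward chain of nonexpansiveness and triangle inequalities, arriving at the identical bound $L\sum_{i=1}^{M}\lambda_{k+M-i}$, which you simply repackage as a forward induction on the error $e_i$. One bookkeeping caveat: under the stated hypothesis $(k\bmod M)+1=q$ the composition $P_{C_{[k+M]}}\cdots P_{C_{[k+1]}}$ you build begins with $P_{C_q}$ and ends with $P_{C_{q-1}}$, so by \eqref{eq:defspecialP} it is literally $P_{q-1}$ rather than $P_q$ (the paper's own proof carries the mirror-image off-by-one), but this relabelling is immaterial to the estimate and to the way the proposition is used downstream.
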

\begin{proof} Using the nonexpansivity of the projection operator (Proposition~\ref{prop:proj}~(ii)) we have
	\begin{align*}
	\|P(x_k)-x_{k+M}\|
	& = \|P_{C_q}P_{C_{q-1}}\dots P_{C_{q+1}}(x_k)-P_{C_q}(x_{k+M-1,N})\|\\
	& \leq \|P_{C_{q-1}}\dots P_{C_{q+1}}(x_k)-x_{k+M-1,N}\|\\
	& \leq \|P_{C_{q-1}}\dots P_{C_{q+1}}(x_k)-x_{k+M-1}\|+\|x_{k+M-1}-x_{k+M-1,N}\|\\
	& \leq \|P_{C_{q-1}}\dots P_{C_{q+1}}(x_k)-P_{C_{q-1}}x_{k+q-2,N}\|+\|x_{k+M-1}-x_{k+M-1,N}\|\\
	& \leq \cdots\\
	& \leq \sum_{i=1}^M \|x_{k+M-i}-x_{k+M-i,N}\|\\
	& \leq \sum_{i=1}^M \sum_{j=1}^N\|x_{k+M-i,j-1}-x_{k+M-i,j}\|\\
	& = \sum_{i=1}^M \lambda_{k+M-i} \sum_{j=1}^N\|v_{k+M-i,j}\|\\
	& \leq \sum_{i=1}^M \lambda_{k+M-i} \sum_{j=1}^N L_j\\
	& =L \sum_{i=1}^M \lambda_{k+M-i}
	\end{align*}
	where $L = \sum_{j=1}^N L_j$. Since $\lambda_k\downarrow 0$, we can always find a sufficiently large number $K$ to ensure the last term is smaller than $\varepsilon $  for all $k\geq K$.
\end{proof}

The next proposition brings us closer to the proof of Lemma~\ref{lem:conv}.

\begin{proposition}\label{prop:cconv} Assume that $\{x_k\}$ is bounded, $\lambda_k\downarrow 0$ and condition~\eqref{eq:Lip} is satisfied. Then
	$$
	\liminf_{k\to \infty}d(x_k,C)= 0.
	$$
\end{proposition}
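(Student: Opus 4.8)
The plan is to argue by contradiction: I would suppose $\liminf_{k\to\infty} d(x_k,C) = c > 0$, so that $d(x_k,C) \geq c/2$ for all sufficiently large $k$, and then show that under this assumption the distance to $C$ must strictly decrease by a fixed amount over every block of $M$ consecutive iterations, forcing it to become negative — which is absurd. The decrease mechanism is the competition between the ``contraction toward $C$'' supplied by a full cycle of projections (Proposition~\ref{prop:psi}) and the merely $O(\lambda_k)$ drift introduced by the subgradient steps (Proposition~\ref{prop:tiny-lambda}).

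First I would fix the ambient compact set. Since $\{x_k\}$ is bounded and $C\neq\emptyset$, I choose a closed ball $X$ large enough to contain every iterate $x_k$ together with one fixed point $z\in C$. Then $X$ is compact and convex, $X\cap C\neq\emptyset$, and (because the tail of the sequence lies outside $C$) $X\setminus C\neq\emptyset$, so the hypotheses of Proposition~\ref{prop:psi} hold for every $q\in\{1,\dots,M\}$. Setting $\psi_X := \max_{1\leq q\leq M}\psi^q_X$, a maximum of finitely many continuous functions each lying strictly below the diagonal, I obtain a continuous function with $\psi_X(\alpha)<\alpha$ for all $\alpha>0$. Writing $A := \sup_k d(x_k,C)<\infty$, the continuous map $\alpha\mapsto \alpha-\psi_X(\alpha)$ is strictly positive on the compact interval $[c/2,A]$ and hence attains a positive minimum $2\eta>0$; thus $\psi_X(\alpha)\leq \alpha-2\eta$ throughout $[c/2,A]$.

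The core step combines the two preceding propositions. For each large $k$ put $q=(k\bmod M)+1$. Proposition~\ref{prop:psi} gives $d(P_q(x_k),C)\leq \psi^q_X(d(x_k,C))\leq \psi_X(d(x_k,C))$, while Proposition~\ref{prop:tiny-lambda}, applied with $\varepsilon:=\eta$ and $K$ taken as the largest of the finitely many thresholds over $q$, gives $\|P_q(x_k)-x_{k+M}\|\leq \eta$ for all $k\geq K$. A triangle inequality then yields, for every $k\geq\max(N_0,K)$ (where $N_0$ is chosen so that $d(x_k,C)\in[c/2,A]$ for $k\geq N_0$),
$$
d(x_{k+M},C)\leq d(P_q(x_k),C)+\|P_q(x_k)-x_{k+M}\|\leq (d(x_k,C)-2\eta)+\eta = d(x_k,C)-\eta.
$$
Iterating this inequality along the progression $k_0,\,k_0+M,\,k_0+2M,\dots$ for a fixed $k_0\geq\max(N_0,K)$ gives $d(x_{k_0+mM},C)\leq d(x_{k_0},C)-m\eta$, which is negative for large $m$, contradicting $d(\cdot,C)\geq 0$. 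This contradiction establishes $\liminf_{k\to\infty}d(x_k,C)=0$.

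I expect the main obstacle to be bookkeeping rather than hard analysis: one must verify that Proposition~\ref{prop:psi} and Proposition~\ref{prop:tiny-lambda} refer to the \emph{same} cyclic-projection operator $P_q$ for the matching residue $q=(k\bmod M)+1$, so that the contraction estimate of $\psi^q_X$ and the $O(\lambda_k)$ drift of the subgradient steps can be applied to the same point $x_k$ and telescoped consistently across the residue classes. Passing to $\psi_X=\max_q \psi^q_X$ is the device that sidesteps having to track which residue occurs at which $k$, which is precisely what makes the per-block decrease $d(x_{k+M},C)\leq d(x_k,C)-\eta$ hold uniformly for all large $k$.
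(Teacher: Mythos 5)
Your proof is correct, but it follows a genuinely different route from the paper's. The paper also argues by contradiction, but softly: it extracts a subsequence $\{x_{k_l}\}$ realizing the liminf $D>0$, passes to a further subsequence with a fixed residue $q=(k_l\bmod M)+1$ and a limit $\hat x$, and then uses the continuity of $\psi^q_X$ \emph{at the single point} $D$ to conclude $d(P_q(x_{k_l}),C)\le \psi(D)+\tfrac13(D-\psi(D))$ eventually; combining with Proposition~\ref{prop:tiny-lambda} gives $d(x_{k_l+M},C)\le D-\tfrac13(D-\psi(D))<D$ along the subsequence, which already contradicts the definition of the liminf --- no iteration is needed. You instead establish a \emph{uniform} gap $\alpha-\psi_X(\alpha)\ge 2\eta$ on the compact interval $[c/2,A]$ (taking $\psi_X=\max_q\psi^q_X$ to avoid tracking residue classes) and telescope the per-block decrease $d(x_{k+M},C)\le d(x_k,C)-\eta$ to drive the distance negative. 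Your version is quantitatively stronger and avoids extracting convergent subsequences, at the cost of the extra bookkeeping with $A=\sup_k d(x_k,C)$ and the lower bound $c/2$ on the tail; all of these steps check out (in particular $A<\infty$ by boundedness of $\{x_k\}$, and the iterate $x_{k_0+mM}$ stays in the regime $[c/2,A]$ where the gap applies, since the contradiction hypothesis keeps the whole tail above $c/2$). It is worth noting that your uniform-gap-plus-block-decrease mechanism is essentially the one the paper deploys later, in the proof of Lemma~\ref{lem:conv}, to upgrade the liminf statement to a full limit; so your argument in effect front-loads that technique, and could be presented as a single combined proof of Proposition~\ref{prop:cconv} and Lemma~\ref{lem:conv}.
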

\begin{proof}
	Assume that the claim is not true. Then for some starting point $x_0$ the sequence $\{x_k\}$ is bounded, but
	$$
	\liminf_{k\to \infty} d(x_k,C) = D>0.
	$$
	Let $\{x_{k_l}\}$ be a subsequence of $\{x_k\}$ such that
	$$
	\lim_{k\to \infty} d(x_{k_l},C) = \liminf_{k\to \infty} d(x_k,C) = D.
	$$
	Without loss of generality we may assume that $x_{k_l}\to \hat x$ and that $(k_l \mod M)+1 = q\in \{1,\dots, M\}$, so that each $x_{k_l}$ is obtained after projecting onto  $C_q$.
	
	Since the sequence $\{x_k\}$ is bounded, we can define the function $\psi = \psi^q_X$ (as in Proposition~\ref{prop:psi}) on any compact set $X$ that contains $\{x_k\}$ and some point from $C$ which we assumed to be nonempty. By the continuity of $\psi$ proved in Proposition~\ref{prop:psi} we have
	$$
	\lim_{k_l\to \infty} d(P_q(x_{k_l}),C) \leq \lim_{k\to \infty} \psi(d(x_{k_l},C)) = \psi(D) <D,
	$$
	where the last inequality follows from $D>0$ and Proposition~\ref{prop:psi}.
	
	Therefore, for sufficiently large $k_l$ we have
	$$
	d(P_q(x_{k_l}),C) \leq \psi(D)+ \frac{D-\psi(D)}{3}.
	$$
	On the other hand, using Proposition~\ref{prop:tiny-lambda} we deduce that for sufficiently large $k_l$ we also have
	$$
	\|x_{{k_l}+1}- P_q(x_{k_l})\|\leq \frac{D-\psi(D)}{3},
	$$
	hence
	\begin{align*}
	d(x_{{k_l}+1},C)
	& \leq  \|x_{k_{l}+1}-P_C(P_q(x_{k_l}))\| \\
	& \leq \|x_{{k_l}+1}- P_q(x_{k_l})\|+\|P_q(x_{k_l})-P_C(P(x_{k_l}))\|\\
	& =  \|x_{{k_l}+1}- P_q(x_{k_l})\|+d(P_q(x_{k_l}),C)\\
	& \leq \frac{D-\psi(D)}{3}+ \psi(D)+ \frac{D-\psi(D)}{3} \\
	& =  D- \frac{D-\psi(D)}{3}<D.
	\end{align*}
	Taking the lower limit, we have
	$$
	\liminf_{k_l\to \infty}d(x_{{k_l}+1},C) \leq  \psi(D)+\frac{2}{3}(D-\psi(D))<D = \liminf_{k\to \infty} d(x_{k},C),
	$$
	a contradiction.
\end{proof}

\begin{proof}[Proof of Lemma~\ref{lem:conv}] It is sufficient to show  that for any $\varepsilon>0$ there exists a sufficiently large $K$ such that for $k\geq K$ we have  $d(x_k, C) < \varepsilon$.

	Fix $\varepsilon>0$.  By Proposition~\ref{prop:psi} for every $q$ the function $\alpha-\psi_X^q(\alpha)$ is continuous and positive on the compact set $[\varepsilon/2,\varepsilon]$. Therefore, it attains its minimum, which is also positive,
	$$
	\min_{\alpha\in [\varepsilon/2, \varepsilon]}[\alpha- \psi_X^q(\alpha)] = \gamma>0.
	$$
	
	By Proposition~\ref{prop:tiny-lambda} there exists $K$ such that
	$$
	\|P^q(x_k)-x_{k+M}\|\leq \frac{\gamma}{2} \quad \forall k\geq K, \quad (k\mod M)+1 = q.
	$$
	By Proposition~\ref{prop:cconv} there exists some $k_0\geq K$ such that
	$$
	d(x_{k_0},C)< \frac{\varepsilon}{2}.
	$$
	Let $q = (k\mod M)+1$. Our goal is to show that $x_{k_0+iM}$, $i\in \NN$ never leaves the $\varepsilon$-neighbourhood of $C$. Assume the contrary. Then for some $k\geq k_0$, $(k\mod M)+1=q$ we have $d(x_k,C)\leq\varepsilon$, but $d(x_{k+M},C)>\varepsilon$. Observe that this yields
	\begin{align*}
	d(x_k,C)
	&\geq \gamma+\psi(d(x_k,C))\\
	& \geq \gamma+ d(P_q(x_k),C)\\
	& = \gamma+ \|P_q(x_k)-P_C(P_q(x_k))\|\\
	& \geq  \gamma+ \|P_C(P_q(x_k))-x_{k+M}\|-\|P_q(x_k)-x_{k+M}\|\\
	& \geq  \gamma-\frac{\gamma}{2}+d(x_{k+M},C) \\
	& >  \frac{\gamma}{2}+\varepsilon>\varepsilon,
	\end{align*}
	a contradiction.
\end{proof}

\section{From asymptotic feasibility to convergence}\label{sec:mainpf}

In the previous section we have shown that all three algorithms (cyclic, sequential and parallel projections) satisfy the asymptotic feasibility property, i.e. under the standard assumptions the sequence of iterates $(x_k)$ satisfies
$$
\lim_{k\to\infty} d_C(x_k)=0.
$$

In this final technical section we prove that this property yields the convergence of the iterative sequence to the optimal solution, which we make precise in Lemma~\ref{le:AFeasAReg}. We then briefly explain the proof of Theorem~\ref{th:main} that is based on this result and on the aforementioned property of asymptotic feasibility.

Our next statement is a useful estimate that will be utilized heavily in the subsequent analysis. Our proof is a minor modification of \cite[Lemma 2.1]{NB2001}.

\begin{lemma}\label{le:estimate}
	Let $\{x_k\}_{k=0}^\infty$ be generated by any of the three projection algorithms,
and assume that the condition \eqref{eq:Lip} is satisfied. Set $L=\sum_{i=1}^N L_i$.
	Then, for each $x\in C$, we have
	\begin{equation}\label{eq:xk+1}
	\| x_{k+1}-x\|^2\le \|x_k-x\|^2-2\lambda_k [f(x_k)-f(x)]+\lambda_k^2L^2.
	\end{equation}
\end{lemma}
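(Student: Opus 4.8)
Let me look at what needs to be proven. We have a point $x \in C$, and iterates generated by one of the three algorithms. All three share the same inner gradient cycle; they differ only in the projection operator $V_{k+1}$. The key observation is that $x \in C$ means $x$ is a common point of all the constraint sets, so by nonexpansivity of projections (and convexity for the parallel case), we have $\|V_{k+1}(y) - x\| \le \|y - x\|$ for any $y$. This lets me reduce all three cases to a single estimate on the gradient cycle.

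So the lemma is about the subgradient descent inner loop. Let me sketch this.
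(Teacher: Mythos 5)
Your reduction step is correct and matches the paper's own proof: since $x\in C\subseteq C_i$ for every $i$, $x$ is a fixed point of each $P_{C_i}$, hence of $V_{k+1}$ in all three cases (for the parallel operator one also uses convexity of the norm), so $\|x_{k+1}-x\|\le\|x_{k,N}-x\|$ and the lemma reduces to an estimate on the inner subgradient cycle. But the proposal stops exactly where the actual content of the lemma begins, and the part you have not sketched contains the one genuinely nontrivial point. Expanding $\|x_{k,j}-x\|^2=\|x_{k,j-1}-x\|^2-2\lambda_k\langle v_{k,j},x_{k,j-1}-x\rangle+\lambda_k^2\|v_{k,j}\|^2$ and applying the subgradient inequality at $x_{k,j-1}$ gives, after summing over $j$, a bound involving $\sum_{j}[f_j(x_{k,j-1})-f_j(x)]$ and $\lambda_k^2\sum_j L_j^2$ --- not the claimed $f(x_k)-f(x)$ and $\lambda_k^2 L^2$. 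The missing idea is to control the drift between the intermediate iterates and $x_k$: one needs $f_j(x_{k,j-1})-f_j(x_k)\ge -L_j\|x_{k,j-1}-x_k\|\ge -L_j\lambda_k\sum_{l<j}L_l$, and then the cross terms combine as
\[
2\lambda_k^2\sum_{j=1}^N L_j\Bigl(\sum_{l=1}^{j-1}L_l\Bigr)+\lambda_k^2\sum_{j=1}^N L_j^2=\lambda_k^2\Bigl(\sum_{j=1}^N L_j\Bigr)^2=\lambda_k^2L^2 .
\]
This algebraic identity is precisely why the error constant in the statement is $L^2=(\sum_j L_j)^2$ rather than $\sum_j L_j^2$; without this step the stated inequality does not follow. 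So the plan is on the right track but incomplete: supply the per-step expansion, the subgradient inequality, and the drift bound above to finish.
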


\begin{proof} Let $V$ be one of the three operators considered for our projection step,
	$$
	V \in \left\{P_{C_M}\cdots P_{C_1},\quad P_{C_{[k+1]}},\quad  \sum_{i=1}^M \beta_iP_{C_i} \;
    \left(\sum_{i=1}^M \beta_i = 1, \beta_i>0\   \forall i\right)\right\}.
	$$	
	Observe that $V$ is nonexpansive, and hence for $x\in C\subseteq \Fix V$,
\begin{equation}\label{eq:xk+1M}
	\|x_{k+1}-x\|=\|Vx_{k,N}-Vx\|\le\|x_{k,N}-x\|.
	\end{equation}
On the other hand, for each $1\leq j\leq N$ and $x\in C$, we have
\begin{align*}
\|x_{k,j}-x\|^2
&=\|(x_{k,j-1}-x)-\lambda_k v_{k,j}\|^2\\
&=\|x_{k,j-1}-x\|^2-2\lambda_k\langle v_{k,j},x_{k,j-1}-x\rangle +\lambda_k^2\|v_{k,j}\|^2.
\end{align*}
Using the Lipschitz bound (\ref{eq:Lip}) and the subdifferential inequality
$$f_j(x)\geq f_j(x_{k,j-1})+\langle v_{k,j},x-x_{k,j-1}\rangle $$
we obtain
$$\|x_{k,j}-x\|^2\leq \|x_{k,j-1}-x\|^2-2\lambda_k[f_j(x_{k,j-1})-f_j(x)]+\lambda_k^2L_j^2. $$
Adding up the above inequalities over $j=1,2,\cdots,N $ yields
\begin{align}\label{eq:xkN}
\|x_{k,N}-x\|^2
&\le \| x_k-x\|^2-2\lambda_k\sum_{j=1}^N [f_j(x_{k,j-1})-f_j(x)]+\lambda_k^2\sum_{j=1}^N L_j^2 \nonumber\\
&=\|x_k-x\|^2-2\lambda_k[f(x_k)-f(x)] \nonumber\\
&\quad -2\lambda_k \sum_{j=1}^N[f_j(x_{k,j-1})-f_j(x_k)]+\lambda_k^2 \sum_{j=1}^N L_j^2.
\end{align}

	
	In view of \eqref{eq:xk+1M}, to show \eqref{eq:xk+1} it remains to bound the last two terms in (\ref{eq:xkN}).
	From the Lipschitz bound \eqref{eq:Lip} we have
	$$
	f_j(x_{k,j-1})-f_j(x_k) \geq  -L_j\|x_{k,j-1}-x_k\|.$$
Also observe that
	\begin{align*}
	\|x_{k,j-1}-x_k\|
	& =\left\|\sum_{l=1}^{j-1} (x_{k,l}-x_{k,l-1})\right\|
	=\left\|\sum_{l=1}^{j-1} \lambda_k v_{k,l}\right\|
	\le\lambda_k \sum_{l=1}^{j-1} L_l,
	\end{align*}
	where $v_{k,l} \in \partial f_l(x_{k,l-1})$. We hence obtain the desired bound
	\begin{align}\label{eq:desiredbound}
	-2 \lambda_k\sum_{j=1}^N (f_j(x_{k,j-1})- f_j(x_{k}))&+\lambda_k^2 \sum_{j=1}^N L_j^2 \nonumber\\
& \leq 2 \lambda^2_k \sum_{j=1}^N L_j  \left(\sum_{l=1}^{j-1} L_l\right) +\lambda^2_k\sum_{j=1}^NL_j^2 = \lambda^2_k L^2.
	\end{align}
	Now combining \eqref{eq:xk+1M} with \eqref{eq:xkN} and \eqref{eq:desiredbound} we obtain \eqref{eq:xk+1}.
\end{proof}

\begin{lemma}\label{le:AFeasAReg}	Let $\{x_k\}$ be a sequence generated by one of the three projection algorithms, and assume that $\{x_k\}$ is bounded and asymptotically feasible, i.e.,
	\begin{equation}\label{eq:ar1}
	\lim_{k\to\infty} d_C(x_k)=0.
	\end{equation}
	Then the following conclusions are satisfied:
	\begin{enumerate}
		\item[(i)]
		$\{x_k\}$ is asymptotically regular, that is, $\lim_{k\to\infty}\|x_{k+1}-x_k\|=0$;
		\item[(ii)]
		$\liminf_{k\to\infty} f(x_k)=f^*$, which implies that $\liminf_{k\to\infty} d_{S^*}(x_k)=0$.
	\end{enumerate}
\end{lemma}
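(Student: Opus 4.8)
The plan is to treat the two conclusions separately, using only elementary consequences of the algorithmic structure for (i) and the fundamental estimate of Lemma~\ref{le:estimate} for (ii). Throughout I use the standing assumptions of the paper, namely the Lipschitz bound \eqref{eq:Lip}, the stepsize conditions \eqref{eq:lambda1} (in particular $\lambda_k\to 0$ and $\sum_k\lambda_k=\infty$), and $S^*\neq\emptyset$. I also use that $f$, being a finite sum of finite-valued convex functions, is continuous on the region containing the iterates and their cluster points.

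For part (i) I would split the displacement through the intermediate point $x_{k,N}$:
\begin{equation*}
\|x_{k+1}-x_k\|\le \|x_{k+1}-x_{k,N}\|+\|x_{k,N}-x_k\|.
\end{equation*}
The second term is at most $L\lambda_k\to 0$ by Lemma~\ref{le:prop-ppa}(i) (and its obvious analogues for the cyclic and sequential methods, where the estimate is identical). For the first term I would exploit asymptotic feasibility: since $d_C(x_{k,N})\le d_C(x_k)+\|x_{k,N}-x_k\|\le d_C(x_k)+L\lambda_k\to 0$, the point $x_{k,N}$ is itself asymptotically feasible. For parallel projections $\|x_{k+1}-x_{k,N}\|\le\sum_i\beta_i d_{C_i}(x_{k,N})\le d_C(x_{k,N})$; for cyclic projections $\|x_{k+1}-x_{k,N}\|=d_{C_{[k+1]}}(x_{k,N})\le d_C(x_{k,N})$; and for sequential projections, writing $y_0=x_{k,N}$ and $y_j=P_{C_j}y_{j-1}$, a telescoping gives $\|x_{k+1}-x_{k,N}\|\le\sum_{j=1}^M d_{C_j}(y_{j-1})$, where each $d_{C_j}(y_{j-1})\le d_C(y_{j-1})\le d_C(y_0)$ because $d_C$ is nonincreasing along projections onto supersets of $C$ (a one-line consequence of nonexpansivity of $P_{C_j}$ together with $P_C y_{j-1}\in C\subseteq C_j$). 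In every case the bound tends to $0$, which proves (i).

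For part (ii) I would establish the two inequalities $\liminf_k f(x_k)\ge f^*$ and $\liminf_k f(x_k)\le f^*$. The first is immediate from asymptotic feasibility and continuity: a convergent subsequence realizing the $\liminf$ has a limit $\bar x$ with $d_C(\bar x)=0$, hence $\bar x\in C$ and $\liminf_k f(x_k)=f(\bar x)\ge f^*$. The reverse inequality is the crux and is where $\sum_k\lambda_k=\infty$ enters. Arguing by contradiction, if $\liminf_k f(x_k)>f^*$ then $f(x_k)\ge f^*+\varepsilon$ for some $\varepsilon>0$ and all large $k$; choosing $x^*\in S^*$ and applying Lemma~\ref{le:estimate} with $x=x^*$ gives, once $\lambda_kL^2<\varepsilon$,
\begin{equation*}
\|x_{k+1}-x^*\|^2\le \|x_k-x^*\|^2-2\lambda_k\varepsilon+\lambda_k^2L^2\le \|x_k-x^*\|^2-\lambda_k\varepsilon.
\end{equation*}
Summing from a large index onward and using $\sum_k\lambda_k=\infty$ forces $\|x_{k+1}-x^*\|^2\to-\infty$, an absurdity; hence $\liminf_k f(x_k)\le f^*$, and equality holds.

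Finally, to deduce $\liminf_k d_{S^*}(x_k)=0$ I would pick a subsequence along which $f(x_k)\to f^*$, pass to a convergent sub-subsequence $x_{k_l}\to\bar x$ (by boundedness), and note that $\bar x\in C$ with $f(\bar x)=f^*$, so $\bar x\in S^*$ and $d_{S^*}(x_{k_l})\le\|x_{k_l}-\bar x\|\to 0$. The main obstacle I anticipate is organizing the reverse inequality in (ii): one must combine the descent afforded by the objective term $-2\lambda_k[f(x_k)-f^*]$ in Lemma~\ref{le:estimate} with the divergence $\sum_k\lambda_k=\infty$, while absorbing the error term $\lambda_k^2L^2$ using $\lambda_k\to 0$. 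By contrast, the inequality $\liminf_k f(x_k)\ge f^*$ is comparatively routine, though it genuinely requires both feasibility of cluster points and continuity of $f$.
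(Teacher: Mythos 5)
Your proof is correct. Part (ii) follows essentially the same route as the paper: the lower bound $\liminf_k f(x_k)\ge f^*$ via a convergent subsequence whose limit is feasible, the reverse inequality by contradiction from Lemma~\ref{le:estimate} together with $\sum_k\lambda_k=\infty$ after absorbing $\lambda_k^2L^2$ using $\lambda_k\to 0$, and the deduction of $\liminf_k d_{S^*}(x_k)=0$ is the standard compactness argument the paper leaves implicit. Part (i), however, takes a genuinely different and more elementary route. The paper extracts a subsequence realizing $\limsup_k\|x_{k+1}-x_k\|$, passes to a limit $\hat x\in C$, and applies the basic estimate \eqref{eq:xk+1} at $x=\hat x$ to conclude that $x_{k_i+1}\to\hat x$ as well. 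You instead bound the displacement directly through the intermediate point $x_{k,N}$: the subgradient cycle contributes at most $L\lambda_k$ by Lemma~\ref{le:prop-ppa}(i), and the projection step contributes at most a fixed multiple of $d_C(x_{k,N})\le d_C(x_k)+L\lambda_k$, using $d_{C_i}\le d_C$ (since $C\subseteq C_i$) and, in the sequential case, the fact that $d_C$ does not increase under projection onto any superset of $C$; all of your case-by-case bounds check out. This buys something real: your argument for (i) does not invoke Lemma~\ref{le:estimate}, needs no compactness to extract convergent subsequences, and is dimension-free --- it yields asymptotic regularity in an arbitrary Hilbert space whenever $d_C(x_k)\to 0$ in norm, which bears directly on Remark~\ref{re:weak-feasibility}, where the paper leaves the validity of (i) in infinite dimensions open (the genuine obstruction there being the hypothesis \eqref{eq:ar1} itself rather than the implication). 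What the paper's subsequence argument buys in exchange is uniformity: it treats all three methods at once through nonexpansivity of the projection step, with no case analysis on the form of $V_{k+1}$.
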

\begin{proof} From Lemma~\ref{le:estimate} we have
	\begin{equation}\label{BasicIneq}
	\|x_{k+1}-x\|^2\le\|x_k-x\|^2-2\lambda_k[f(x_k)-f(x)]+\lambda_k^2L^2,\quad x\in C.
	\end{equation}
	(i)
	Take a subsequence $\{x_{k_i}\}$ of $\{x_k\}$ such that
	\begin{equation}\label{lim}
	\limsup_{k\to\infty}\|x_{k+1}-x_k\|=\lim_{i\to\infty}\|x_{k_i+1}-x_{k_i}\|.
	\end{equation}
	With no loss of generality, we may assume $x_{k_i}\to\hat{x}$; then $\hat{x}\in C$ by (\ref{eq:ar1}).
	Use (\ref{BasicIneq}) with $k$ and $x$ replaced with $k_i$ and $\hat{x}$, respectively, to get
	(noting that $\lambda_{k_i}\to 0$ and $(f(x_{k_i}))$ is bounded)
	\begin{equation*}
	\|x_{k_i+1}-\hat{x}\|^2\le\|x_{k_i}-\hat{x}\|^2-2\lambda_{k_i}[f(x_{k_i})-f(\hat{x})]+\lambda_{k_i}^2L^2\to 0.
	\end{equation*}
	It turns out that $x_{k_i+1}\to \hat{x}$. Returning to (\ref{lim}), we immediately find that $\|x_{k+1}-x_k\|\to 0$.

	(ii)
	We have a subsequence $\{x_{k_i}\}$ of $\{x_k\}$ such that
	$$\liminf_{k\to\infty} f(x_k)=\lim_{i\to\infty} f(x_{k_i}).$$
	Due to boundedness, we may also assume $x_{k_i}\to x'$. By part (i), $x'\in C$
	and we therefore $\liminf_{k\to\infty} f(x_k)=f(x')\ge f^*$.
	
	On the other hand, if
	$\liminf_{k \to \infty}f(x_k)>f^*$, then there exist some $\varepsilon_0>0$ and $k'\ge 0$ such that
	$f(x_k)>f^*+\varepsilon_0$ and $\lambda_kL^2<\varepsilon_0$ for all $k\ge k'$.
	It then turns out from (\ref{eq:xk+1}) that, for $x\in S^*$ and $k\geq k'$,
	\begin{align*}
	\varepsilon_0\lambda_k\le\|x_k-x\|^2-\|x_{k+1}-x\|^2.
	\end{align*}
	This implies that the series $\sum_{k=k'}^\infty \lambda_k<\infty$, which contradicts (\ref{eq:lambda1}).
	So we must have $\liminf_{k \to \infty}f(x_k)\le f^*$.
\end{proof}

We finish this section with the proof of Theorem~\ref{th:main}. The proof that we provide below contains a point
that is essentially different from that of \cite{DN2009}, which makes us successfully remove the
assumption in  \cite[Theorem 1]{DN2009} and  in \cite[Proposition 2.3]{NB2001} that the optimal solution set $S^*$ be bounded.
Note that this condition is equivalent to (\cite{R1997}) the condition that the objective function $f$ satisfies the coercivity
property: $f(x)\to\infty$ as $\|x\|\to\infty$.

\begin{proof}[Proof of Theorem~\ref{th:main}] It is sufficient to prove that the following two claims are true under the conditions of Theorem~\ref{th:main} (that $\HH $ is finite-dimensional, the sequence $\{x_k\}$ is bounded, and the two conditions \eqref{eq:lambda1} and \eqref{eq:Lip} are satisfied):
	\begin{enumerate}
		\item[(i)]
		$\lim_{k\to\infty} d_{S^*}(x_k)=0$; in other words, every cluster point of $\{x_k\}$ is an optimal solution
		of (\ref{min:f});		
		\item[(ii)] $\lim_{k\to\infty} f(x_k)=f^*$.
	\end{enumerate}
	Observe that (ii) is an immediate consequence of (i) due to the continuity of the objective function $f$. We hence focus on proving (i).
	
Observe that for each $\varepsilon>0$ and each $k\in \NN$ exactly one of the two possibilities holds:
\begin{enumerate}
	\item[(1)] $f(x_k)>f^*+\varepsilon$ and
	\item[(2)] $f(x_k)\le f^*+\varepsilon$.
\end{enumerate}	
	First consider case (1). By (\ref{eq:xk+1}), we get
	\begin{equation*}
	\|x_{k+1}-x\|^2\le \|x_k-x\|^2-2\lambda_k [f(x_k)-f^*]+\lambda_k^2L^2.
	\end{equation*}
	It turns out that
	\begin{align*}
	d^2_{S^*}(x_{k+1})&\le d^2_{S^*}(x_k)-2\lambda_k [f(x_k)-f^*]+\lambda_k^2L^2\\
	&<d^2_{S^*}(x_k)-\lambda_k (2\varepsilon-\lambda_kL^2).
	\end{align*}
	Since $\lambda_k\to 0$, we may assume $\lambda_kL^2<\varepsilon$. We then get for sufficiently large $k$
	\begin{equation}\label{d2}
	d^2_{S^*}(x_{k+1})<d^2_{S^*}(x_k)-\varepsilon\lambda_k.
	\end{equation}
	In particular,
	\begin{equation}\label{d2S}
	d_{S^*}(x_{k+1})<d_{S^*}(x_k).
	\end{equation}
	
	We now turn to consider case (2) which is valid infinitely often as $\liminf_{k\to\infty}f(x_k)=f^*$.
	Define
	\begin{equation}\label{var}
	\varphi_k(\varepsilon):=\sup\{d_{S^*}(x_j): j\ge k, \  f(x_j)\le f^*+\varepsilon\}.
	\end{equation}
	It is easy to see that $\varphi_k(\varepsilon)$ is decreasing in $k$ and $\varepsilon>0$, respectively.
	Let
	\begin{equation}\label{varp}
	\varphi(\varepsilon):=\lim_{k\to\infty}\varphi_k(\varepsilon).
	\end{equation}
	It is not hard to find that
	\begin{equation}\label{limv}
	\lim_{\varepsilon\downarrow 0}\varphi(\varepsilon)=0.
	\end{equation}
	
	Indeed, if $\eta:=\lim_{\varepsilon\downarrow 0}\varphi(\varepsilon)>0$, we can find
	$\varepsilon_0>0$ such that $\varphi(\varepsilon)>\frac12\eta$ for all $0<\varepsilon<\varepsilon_0$.
	Upon taking a positive sequence $\varepsilon_0>\varepsilon_i\to 0$, we get a subsequence $\{x_{k_i}\}$ of $\{x_k\}$
	such that
	$$f(x_{k_i})\le f^*+\varepsilon_i\quad {\rm and}\quad d_{S^*}(x_{k_i})\ge \frac12\eta$$
	for all $i$.
	Assuming that $\{x_{k_i}\}$ converges to some $\bar x\in C$, we obtain the following contradiction:
	$$f(\bar x)\le f^* \  \mbox{(thus, $\bar x\in S^*$)}\quad {\rm and}\quad d_{S^*}(\bar x)\ge \frac12\eta>0
	\ \mbox{(thus, $\bar x\not\in S^*$)}.$$
	Hence, (\ref{limv}) is proven.
	
	To prove $d_{S^*}(x_k)\to 0$, noting Lemma~\ref{le:ppa2} (for parallel projections) Lemmas~\ref{lem:conv} (for cyclic projections) and \cite[Proposition~1]{DN2009} (for sequential projections)   together with Lemma \ref{le:AFeasAReg} and (\ref{varp}), we can take $k_0$ such that
	\begin{enumerate}
		\item[(i)]
		$d_{S^*}(x_{k_0})<\varepsilon$;
		\item[(ii)]
		$\lambda_kL^2<\varepsilon$ and $d_C(x_k)<\frac14\varepsilon$ for all $k\ge k_0$;
		\item[(iii)]
		$\|x_{k+1}-x_k\|<\frac12\varepsilon$ for all $k\ge k_0$;
		\item[(iv)]
		$\varphi_k(\varepsilon)<\varphi(\varepsilon)+\frac12\varepsilon$ for all $k\ge k_0$.
	\end{enumerate}
	We next prove by induction that
	\begin{equation}\label{dS*xk}
	d_{S^*}(x_{k_0+i})<\varphi(\varepsilon)+\varepsilon
	\end{equation}
	for each $i\ge 0$. This holds trivially when $i=0$.
	Upon assuming (\ref{dS*xk}) for $i$, we shall prove it for $i+1$. As a matter of fact,
	if $f(x_{k_0+i})\ge f^*+\varepsilon$, then by (\ref{d2S}), we get
	$d_{S^*}(x_{k_0+i+1})<d_{S^*}(x_{k_0+i})<\varphi(\varepsilon)+\varepsilon$
	and (\ref{dS*xk}) holds for $i+1$.
	If $f(x_{k_0+i})\le f^*+\varepsilon$, then using (iii) and (iv), and the definition (\ref{var}) of $\varphi_{k_0+i}$, we obtain
	\begin{align*}
	d_{S^*}(x_{k_0+i+1})&\le d_{S^*}(x_{k_0+i})+\|x_{k_0+i+1}-x_{k_0+i}\|\\
	&\le \varphi_{k_0+i}(\varepsilon)+\frac12\varepsilon\\
	&<\varphi(\varepsilon)+\varepsilon.
	\end{align*}
	and (\ref{dS*xk}) holds as well.
	
	Finally, (\ref{dS*xk}) implies that $\limsup_{k\to\infty}d_{S^*}(x_{k})\le\varphi(\varepsilon)+\varepsilon$
	which in turn implies that $\lim_{k\to\infty}d_{S^*}(x_{k})=0$ since $\varepsilon>0$ is arbitrary.
	
\end{proof}

\section{Generalizations}

In this section we discuss the extent to which our results can be directly generalized to the infinite-dimensional Hilbert space setting, and provide several extensions of the proposed algorithms.

\subsection{Infinite-dimensional real Hilbert space}\label{ssec:inf}

We first consider the infinite-dimensional setting. We clarify the generalizations of our main technical results in the next remark and then present the generalization explicitly in Theorem~\ref{th:ppa2}.

\begin{remark}\label{re:weak-feasibility}	Note that Lemmas~\ref{le:ppa2} and \ref{le:ppa1} remain valid in the infinite-dimensional case. In the proof of Lemma~\ref{le:ppa1},
	we may assume that subsequence $x_{k_l}\to\hat{x}$ weakly. Using the weak lower-semicontinuity of the convex function
	$\sum_{j=1}^M\beta_j d_{C_j}^2$, we still get (\ref{s}).
	
	In Lemma \ref{le:ppa2}, if ${\rm dim}\, \HH =\infty$, it turns out that $x^*\in C$
	for all $x^*\in\omega_w(x_k)$, the set of all weak cluster points of $\{x_k\}$.
	Indeed, if $x_{k_i}\to x^*$ weakly, then the weak lower-semicontinuity of the
	distance function $d_C$ implies that
	$$d_C(x^*)\le\liminf_{i\to\infty}d_C(x_{k_i})=\lim_{k\to\infty}d_C(x_k)=0.$$
	Hence, $x^*\in C$.
	
	Lemma \ref{le:AFeasAReg}(ii) also remains valid for the case of parallel projections. In fact, in this case, we have $x_{k_i}\to x'$ weakly, and
	from the proof of Lemma \ref{le:AFeasAReg}(ii),
	we get $$\liminf_{k\to\infty}f(x_{k})=\lim_{i\to\infty} f(x_{k_i})\ge f(x')\ge f^*$$
	as $x'\in C$.
	
	It is unclear if the asymptotic regularity of $\{x_k\}$ (i.e., Lemma \ref{le:AFeasAReg}(i))
	remains valid if ${\rm dim}\, \HH =\infty$.

\end{remark}

Based on Remark~\ref{re:weak-feasibility} we can state the following (incomplete) result in a general Hilbert space which may be infinite-dimensional.

\begin{theorem}\label{th:ppa2}
	Let $\{x_k\}$ be the sequence generated by the parallel projection algorithm 	in a general Hilbert space $\HH $.
	Assume (\ref{eq:lambda1}) and (\ref{eq:Lip}). Then
	there exists a subsequence $\{x_{k_j}\}$ of $\{x_k\}$ such that
	$\{x_{k_j}\}$ converges weakly to an optimal solution $x^*\in S^*$, and
	$\{f(x_{k_j})\}$ converges to the optimal value $f^*$.
	If, in addition, the limit of the full sequence $\{f(x_k)\}$ exists as $k\to\infty$, then the full sequence
	$\{x_k\}$ converges weakly to the optimal solution $x^*$, and
	$\{f(x_{k})\}$ converges to the optimal value $f^*$.
	
\end{theorem}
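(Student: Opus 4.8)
The plan is to obtain both assertions from Opial's lemma, relying on the infinite-dimensional versions of the technical results collected in Remark~\ref{re:weak-feasibility}; throughout I work with the boundedness of $\{x_k\}$ that Lemma~\ref{le:ppa2} requires. First I would assemble the three ingredients that survive in infinite dimensions: asymptotic feasibility $d_C(x_k)\to 0$ (Lemma~\ref{le:ppa2}), the equality $\liminf_{k\to\infty}f(x_k)=f^*$ (the parallel case of Lemma~\ref{le:AFeasAReg}(ii)), and the basic estimate $\|x_{k+1}-x\|^2\le\|x_k-x\|^2-2\lambda_k[f(x_k)-f^*]+\lambda_k^2L^2$ valid for every $x\in S^*$ (Lemma~\ref{le:estimate}). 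From the first two I would check that every weak cluster point $\bar x$ of $\{x_k\}$ lies in $S^*$: weak lower semicontinuity of $d_C$ forces $\bar x\in C$, and weak lower semicontinuity of the convex function $f$ together with $\bar x\in C$ gives $f^*\le f(\bar x)\le\liminf_j f(x_{k_j})=f^*$.

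For the first, unconditional, assertion I would take a subsequence along which $f(x_{k_j})\to\liminf_k f(x_k)=f^*$, pass to a further weakly convergent subsequence $x_{k_j}\rightharpoonup x^*$ (possible by boundedness), and read off $x^*\in S^*$ and $f(x_{k_j})\to f^*$ from the cluster-point analysis above.

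For the conditional assertion, the extra hypothesis that $\lim_k f(x_k)$ exists forces $\lim_k f(x_k)=f^*$, since the limit must coincide with the liminf. I would then invoke Opial's lemma with target set $S^*$: every weak cluster point of $\{x_k\}$ already lies in $S^*$, so it remains only to verify that $\lim_k\|x_k-p\|$ exists for each $p\in S^*$. One companion fact I would establish first is asymptotic regularity, which (unlike in the proof of Lemma~\ref{le:AFeasAReg}(i)) holds here in any dimension for the parallel scheme: since $x_{k+1}-x_{k,N}=\sum_{i=1}^M\beta_i(P_{C_i}x_{k,N}-x_{k,N})$ one has $\|x_{k+1}-x_{k,N}\|\le\sum_{i=1}^M\beta_i d_{C_i}(x_{k,N})\le d_C(x_{k,N})\to 0$ (using $d_C(x_{k,N})\le d_C(x_k)+\lambda_kL$ by Lemma~\ref{le:prop-ppa}(i)), while $\|x_{k,N}-x_k\|\le\lambda_kL\to 0$, whence $\|x_{k+1}-x_k\|\to 0$.

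The hard part will be verifying Opial's condition that $\lim_k\|x_k-p\|$ exists. The estimate of Lemma~\ref{le:estimate} only delivers $\|x_{k+1}-p\|^2\le\|x_k-p\|^2+\lambda_k^2L^2-2\lambda_k[f(x_k)-f^*]$, and because (\ref{eq:lambda1}) guarantees neither $\sum_k\lambda_k^2<\infty$ nor a sign for $f(x_k)-f^*$ at infeasible iterates, this is not yet a quasi-Fej\'er inequality; this is precisely where infinite-dimensionality bites. The role of the hypothesis $f(x_k)\to f^*$ is to render the indefinite cross-term negligible, and I would try to upgrade the estimate to genuine monotonicity by absorbing the residual errors into the sharper parallel bound carrying the extra nonpositive term $-\sum_i\beta_i d_{C_i}^2(x_{k,N})$ from Lemma~\ref{le:prop-ppa}(ii). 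With asymptotic regularity making the increments $\|x_{k+1}-p\|^2-\|x_k-p\|^2$ vanish, the crux is then to exclude oscillation of $\{\|x_k-p\|^2\}$ between distinct cluster values. Once $\lim_k\|x_k-p\|$ is secured, Opial's lemma yields a single weak limit in $S^*$, necessarily the point $x^*$ of the first assertion, and $f(x_k)\to f^*$ is already in hand.
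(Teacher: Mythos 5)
Your treatment of the first, unconditional, assertion is correct and coincides with the paper's own proof: choose a subsequence realizing $\liminf_k f(x_k)=f^*$ (the parallel, infinite-dimensional case of Lemma~\ref{le:AFeasAReg}(ii) via Remark~\ref{re:weak-feasibility}), extract a weak limit $x^*$ by boundedness (which, as you note, must be assumed even though the theorem's statement omits it), and conclude $x^*\in C$ and $f(x^*)=f^*$ from weak lower semicontinuity of $d_C$ and of the convex function $f$. Your side observation that asymptotic regularity does survive in infinite dimensions for the parallel scheme, via $\|x_{k+1}-x_{k,N}\|\le\sum_{i=1}^M\beta_i d_{C_i}(x_{k,N})\le d_C(x_{k,N})\to 0$ together with $\|x_{k,N}-x_k\|\le\lambda_k L$, is correct (granting the infinite-dimensional validity of Lemma~\ref{le:ppa2} asserted in Remark~\ref{re:weak-feasibility}) and is actually sharper than the paper, which declares this point unclear.

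The genuine gap is in the second, conditional, assertion: your Opial strategy is a plan, not a proof. You correctly isolate the missing ingredient --- the existence of $\lim_k\|x_k-p\|$ for each $p\in S^*$ --- but you never establish it; ``I would try to upgrade the estimate to genuine monotonicity'' and ``the crux is then to exclude oscillation'' name the difficulty without resolving it. The obstruction is real: the paper's own Example (following Remark~\ref{re:1}) exhibits $\alpha_k=|\sin\log k|$ satisfying the abstract recursion \eqref{alp} with $\mu_k\to 0$ and $\{\beta_k\}\in\ell_2\setminus\ell_1$ yet divergent, so the inequality of Lemma~\ref{le:estimate} combined with $f(x_k)\to f^*$ and \eqref{eq:lambda1} cannot by itself force the required limit to exist, and the additional nonpositive term $-\sum_{i}\beta_i d_{C_i}^2(x_{k,N})$ from Lemma~\ref{le:prop-ppa}(ii) tends to zero by asymptotic feasibility, so it offers no evident way to absorb the non-summable error $\lambda_k^2L^2$ or the possibly positive cross-term at infeasible iterates. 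For what it is worth, the paper's own proof of Theorem~\ref{th:ppa2} stops after the subsequential claim and supplies no argument whatsoever for the conditional assertion, so you have not missed an argument the authors actually give; you have only been more candid than they are about where the difficulty lies.
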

\begin{proof}
	By Remark \ref{re:weak-feasibility}, we have a subsequence $\{x_{k_j}\}$ of $\{x_k\}$ such that
	\begin{equation}\label{eq:xkj}
	\lim_{j\to \infty}f(x_{k_j})=\liminf_{k\to\infty}f(x_k)=f^*.
	\end{equation}
	We may also assume that $x_{k_j}\to x^* $ weakly as $j\to \infty$. Notice that $x^*\in C$ again by Remark \ref{re:weak-feasibility}.
	So the  weak lower-semicontinuity, we get
	$$f^*\le f(x^*)\le \liminf_{k\to\infty}f(x_k)=f^*.$$
	It turns out that $f(x^*)=f^*$.
	
\end{proof}

\subsection{Relaxing the Assumptions}

We have mentioned earlier that it is possible to replace the Lipschitz condition~\ref{eq:Lip} by the assumption that the components of the objective functions are bounded on bounded sets.

\begin{remark}\label{re:1}
	Theorem \ref{th:main} removes the boundedness assumption of the solution set $S^*$ of (\ref{min:f})
	of \cite[Theorem 1]{DN2009}.
	It is an open question whether or not the full sequence $\{x_k\}$ converges
	under the conditions in Theorem \ref{th:main}, even if  we further assume
	(a) $S^*$ is bounded and (b) $\{\lambda_k\}$ satisfies the following stronger condition:
	\begin{equation}\label{sum}
	\sum_{k=0}^\infty\lambda_k=\infty,\quad \sum_{k=0}^\infty\lambda^2_k<\infty.
	\end{equation}
\end{remark}
All information that is available is given by the inequality
\begin{equation}\label{xk+1}
\|x_{k+1}-x^*\|^2\le \|x_k-x^*\|^2-2\lambda_k [f(x_k)-f^*]+\lambda_k^2L^2,
\end{equation}
where $x^*\in S^*$.  Setting $\alpha_k=\|x_k-x^*\|^2$, $\beta_k=\lambda_kL$, and $\mu_k=2(f(x_k)-f^*)/L$,
we can rewrite (\ref{xk+1}) as
\begin{equation}\label{alp}
\alpha_{k+1}\le \alpha_k-2\beta_k\mu_k+\beta_k^2,
\end{equation}
where $\{\mu_k\}$ satisfies the condition:
\begin{equation}\label{mu}
\mu_k\in \mathbb{R}\ (\forall k\ge 0)\  {\rm and}\ \lim_{k\to\infty}\mu_k=0
\end{equation}
and $\{\beta_k\}$ satisfies the conditions:
\begin{equation}\label{bet}
\beta_k\ge 0\ (\forall k\ge 0),\   \{\beta_k\}\not\in\ell_1,\  \{\beta_k\}\in\ell_2.
\end{equation}
However, the conditions (\ref{mu}) and (\ref{bet}) are insufficient to imply
from (\ref{alp}) that
$\lim_{k\to\infty}\alpha_k$ exists, as shown by the example below.

\begin{example}
	Take $\alpha_k=|\sin\log k|$ ($k\ge 1$), $\beta_k=\frac1{k^{\alpha}}$, with $\alpha\in (0,1)$ and
	$2\alpha>1$ (e.g. $\alpha=\frac23$).
	Let $\mu_k$ satisfy the equation:
	$$\frac1{k}=-\beta_k\mu_k+\beta^2_k.$$
	In other words,
	$$\mu_k=-\frac{1}{k^{1-\alpha}}+\frac{1}{k^\alpha}\to 0.$$
	(Note that $\mu_k<0$ for all $k$.)
	Then $\{\mu_k\}$ and $\{\beta_k\}$ satisfy (\ref{mu}) and (\ref{bet}), respectively.
	Also, $\{\alpha_k\}$ satisfies (\ref{alp}). As a matter of fact, we have
	\begin{align*}
	\alpha_{k+1}-\alpha_k
	&=|\sin\log (k+1)|-|\sin\log k|\\
	&\le |\sin\log (k+1)-\sin\log k|\\
	&\le |\log (k+1)-\log k|=\log (1+\frac1{k})\\
	&\le\frac1{k}=-\beta_k\mu_k+\beta^2_k.
	\end{align*}
	However, $\{\alpha_k\}$ is divergent (this is easy to see from observing that $\log x - \log (x+1) = \log \frac{x}{x+1}$ converges to zero, and that $\log x\to \infty$ as $x\to \infty$; hence the expression $|\sin \log k|$ takes values infinitely close to $0$ and $1$ as $k$ goes to infinity).
	
\end{example}
\begin{remark}
	A sufficient condition for  $\{\alpha_k\}$ to be convergent is that $\mu_k\ge 0$ for all sufficiently large $k$.
	In this case, the inequality (\ref{alp}) implies
	\begin{equation}\label{alph}
	\alpha_{k+1}\le \alpha_k+\beta_k^2
	\end{equation}
	for all large enough $k$. This together with the assumption of $\{\beta_k\}\in\ell_2$
	is sufficient to imply that $\lim_{k\to\infty}\alpha_k$ exists.
	
	Returning to the sequence $\{x_k\}$, we can't get any convergence information from the inequality
	(\ref{xk+1}) since we do not know for what $k$, $x_k$ is feasible (i.e., $x_k\in C$); in other words,
	we do not know for what $k$, $f(x_k)-f^*\ge 0$.
\end{remark}

The following is another partial answer to the open question set forth in Remark \ref{re:1}.

\begin{proposition}\label{prop:finite-cluster}
	Under the conditions of Theorem \ref{th:main}, if $\{x_k\}$ has at most finitely many cluster points, then
	$\{x_k\}$ converges to an optimal solution of (\ref{min:f}). In particular, if $f$ is strictly convex,
	then $\{x_k\}$ converges to the unique optimal solution of (\ref{min:f}).
\end{proposition}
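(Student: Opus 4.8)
The plan is to invoke the elementary fact that a bounded sequence in a finite-dimensional space converges precisely when it has a unique cluster point, and to show that the finiteness of the cluster-point set, combined with asymptotic regularity, forces this uniqueness. Two ingredients are already available under the hypotheses of Theorem~\ref{th:main}: by that theorem every cluster point of $\{x_k\}$ belongs to $S^*$ (so $\lim_{k\to\infty} d_{S^*}(x_k)=0$), and by Lemma~\ref{le:AFeasAReg}(i) the sequence is asymptotically regular, $\|x_{k+1}-x_k\|\to 0$ (applicable because under these hypotheses $\{x_k\}$ is bounded and asymptotically feasible). First I would denote the assumed finite cluster set by $\{p_1,\dots,p_m\}\subseteq S^*$ and reduce the problem to proving $m=1$, since a bounded sequence with a single cluster point converges to it. Arguing by contradiction, I assume $m\geq 2$ and fix $\rho>0$ small enough that the balls $B(p_i,\rho)$ are pairwise disjoint and separated by a gap of at least $\rho$, e.g. $3\rho<\min_{i\neq j}\|p_i-p_j\|$.

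The first genuine step is to trap the tail of the sequence inside $\bigcup_{i=1}^m B(p_i,\rho)$: I claim there is $K_1$ with $x_k\in\bigcup_{i=1}^m B(p_i,\rho)$ for all $k\geq K_1$. If this failed, infinitely many $x_k$ would lie outside this open neighbourhood of the cluster set; by boundedness (finite-dimensionality) such a subsequence would have a convergent sub-subsequence whose limit lies outside every $B(p_i,\rho)$, yet that limit is necessarily a cluster point of $\{x_k\}$ and hence equal to some $p_i$, a contradiction.

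The second step confines the sequence to a single ball. Using asymptotic regularity I pick $K_2$ with $\|x_{k+1}-x_k\|<\rho$ for $k\geq K_2$. For $k\geq K:=\max\{K_1,K_2\}$ the point $x_k$ lies in one of the separated balls while the displacement to $x_{k+1}$ is strictly smaller than the gap between distinct balls; hence $x_{k+1}$ lies in the same ball as $x_k$. By induction all $x_k$ with $k\geq K$ remain in a single ball $B(p_{i_0},\rho)$. Then for every $j\neq i_0$ only finitely many terms lie near $p_j$, so $p_j$ is not a cluster point, contradicting $m\geq 2$. Therefore $m=1$ and $\{x_k\}$ converges to its unique cluster point, which lies in $S^*$ by Theorem~\ref{th:main} and is thus an optimal solution of \eqref{min:f}.

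For the strictly convex case I would note that strict convexity of $f$ on the convex feasible set $C$ forces $S^*$ to be a singleton (two distinct minimizers would give a strictly smaller value at their midpoint, which is feasible); since every cluster point of $\{x_k\}$ lies in $S^*$, the sequence has exactly one cluster point, and the argument above with $m=1$ yields convergence to the unique minimizer. The step I expect to be the main obstacle is the first trapping claim: it is where boundedness and the precise characterization of the cluster-point set are combined, and one must choose the neighbourhoods with a definite separation gap so that asymptotic regularity genuinely rules out transitions between distinct balls.
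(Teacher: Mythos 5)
Your proof is correct and follows essentially the same route as the paper's: both trap the tail of the sequence in pairwise separated neighbourhoods of the finitely many cluster points, then use asymptotic regularity ($\|x_{k+1}-x_k\|\to 0$, from Lemma~\ref{le:AFeasAReg}(i)) to rule out transitions between distinct neighbourhoods, forcing a single cluster point. The only cosmetic difference is that the paper derives the contradiction by examining the first index at which the sequence enters the neighbourhood of a second cluster point, whereas you show the sequence is locked into one ball forever; these are the same idea.
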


\begin{proof}
Assume that $\{x_k\}$ has $m$ cluster points, where $m\ge 1$ is an integer.
We shall prove $m=1$ by contradiction. Suppose $m>1$ and let $\xi_1,\cdots,\xi_m$
be the $m$ distinct cluster points of $\{x_k\}$. Let $\varepsilon$ satisfy the condition:
$$0<\varepsilon<\frac{\min\{\|\xi_i-\xi_j\|: 1\le i\not=j\le m\}}{\max\{m+1,3\}}.$$
	Define
	$$N_i:=\{k\in\mathbb{N}: \|x_k-\xi_i\|<\varepsilon\},\quad i=1,2,\cdots, m.$$
It is easy to see that $\{N_i\}$ are mutually disjoint: $N_i\cap N_j=\emptyset$ for all $i\not=j$, and
	$$\mathbb{N}\setminus \cup_{i=1}^m N_i$$
	is at most a finite set. Therefore, we may assume that
	$$\mathbb{N}=\cup_{i=1}^m N_i.$$
We then take an integer $k_0$ big enough so that
	\begin{equation}\label{xk+1-}
	\|x_{k+1}-x_k\|<\varepsilon,\quad k\ge k_0.
	\end{equation}
Next we take a smallest integer $k'>k_0$  such that
		\begin{equation}\label{xk'-x}
	\|x_{k'}-\xi_1\|<\varepsilon.
	\end{equation}
	Now since $k'-1\in N_{i'}$ for some $i'>1$ (i.e., $\|x_{k'-1}-\xi_{i'}\|<\varepsilon$), we arrive at the contradiction:
	\begin{align*}
	3\varepsilon<\|\xi_1-\xi_{i'}\|\le\|\xi_1-x_{k'}\|+\|x_{k'}-x_{k'-1}\|+\|x_{k'-1}-\xi_{i'}\|<3\varepsilon.
	\end{align*}
Consequently, we must have $m=1$; equivalently, the full sequence $\{x_k\}$ converges.
\end{proof}

\begin{remark}
The conclusions of Proposition \ref{prop:finite-cluster} hold true in a more general
case where the sequence $\{x_k\}$ has a set of cluster points
which is strongly isolated in the sense that
\begin{equation*}
\delta:=\inf\{\|\xi-\eta\|: \xi,\eta\in\omega(x_k),\   \xi\not=\eta\}>0.
\end{equation*}
Here $\omega(x_k)$ is the set of cluster points of $\{x_k\}$.

Indeed, let $0<\varepsilon<\frac12\delta$ and let $k_0$ satisfy (\ref{xk+1-}).
Due to the compactness of $\overline{\{x_k\}_{k=1}^\infty}$, we can find an integer $m\ge 1$
with the property
$$\bigcup_{i=1}^m B(x_{i},\varepsilon)\supset \overline{\{x_k\}_{k=1}^\infty}\supset \omega(x_k).$$
We may assume  $\#\omega(x_k)>m$ (the case where $\#\omega(x_k)\le m$
being proven in Proposition \ref{prop:finite-cluster}).
Consequently, there exists a ball $B(x_{i},\varepsilon)$ (for some $1\le i\le m$) which
contains at least two points of $\omega(x_k)$, $\xi_1$ and $\xi_2$ (say).
It turns out from the definition of $\delta$ that
$$\delta\le \|\xi_1-\xi_2\|<2\varepsilon.$$
This is a contradiction as $2\varepsilon<\delta$.

\end{remark}

\begin{proposition}
	Under the conditions of Theorem \ref{th:main}, if we assume $M=1$ (i.e., $C=C_1$)
	and $(\lambda_k)\in\ell_2$ (This is considered in \cite{NB2001}), then
	$\{x_k\}$ converges to an optimal solution of (\ref{min:f}).
\end{proposition}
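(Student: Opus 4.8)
The plan is to exploit the special structure of the single-constraint case to upgrade the basic estimate of Lemma~\ref{le:estimate} into a quasi-Fej\'er inequality, and thereby promote the cluster-point convergence already guaranteed by Theorem~\ref{th:main} to convergence of the \emph{full} sequence. The crucial first observation is that when $M=1$ all three projection operators coincide with $P_C=P_{C_1}$ (in the cyclic case the index is $[k+1]=(k\bmod 1)+1=1$, and in the parallel case $\beta_1=1$), so the iteration reduces to $x_{k+1}=P_C(x_{k,N})$. Consequently $x_k\in C$ for every $k\ge 1$, and hence $f(x_k)-f^*\ge 0$ for all $k\ge 1$. This is precisely the sign information that is \emph{unavailable} when $M>1$, and whose absence is the source of the open problem recorded in Remark~\ref{re:1}; securing it here is really the whole point of the argument.

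With this in hand, I would specialize \eqref{eq:xk+1} of Lemma~\ref{le:estimate} by taking $x=x^*$ for an arbitrary fixed $x^*\in S^*$, and then discard the nonpositive term $-2\lambda_k[f(x_k)-f^*]$ to obtain, for all $k\ge 1$,
\begin{equation*}
\|x_{k+1}-x^*\|^2\le \|x_k-x^*\|^2+\lambda_k^2 L^2 .
\end{equation*}
Since $(\lambda_k)\in\ell_2$, the correction terms are summable, so by the standard fact about nonnegative sequences satisfying $a_{k+1}\le a_k+b_k$ with $\sum_k b_k<\infty$ (one checks that $a_k+\sum_{j\ge k}b_j$ is nonincreasing and bounded below, hence convergent, and that $\sum_{j\ge k}b_j\to 0$), the limit $\lim_{k\to\infty}\|x_k-x^*\|$ exists for every $x^*\in S^*$.

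To finish, I would invoke Theorem~\ref{th:main}: the sequence $\{x_k\}$ is bounded and every one of its cluster points lies in $S^*$, so there is at least one cluster point $\bar x\in S^*$. Applying the previous step with $x^*=\bar x$, the number $\ell:=\lim_{k\to\infty}\|x_k-\bar x\|$ exists; but along a subsequence $x_{k_i}\to\bar x$ we have $\|x_{k_i}-\bar x\|\to 0$, which forces $\ell=0$. Therefore $x_k\to\bar x\in S^*$, proving the claim. The only step that requires genuine insight rather than routine estimation is the structural observation that every iterate (after the first) is feasible; once that is in place the argument is mechanical, and the hypothesis $(\lambda_k)\in\ell_2$ enters exactly where needed, to control the accumulated error $\sum_k\lambda_k^2 L^2$ and thus guarantee existence of the distance limits.
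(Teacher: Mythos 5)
Your proof is correct and follows essentially the same route as the paper's: the feasibility of every iterate when $M=1$ gives $f(x_k)-f^*\ge 0$, which turns \eqref{eq:xk+1} into $\|x_{k+1}-x^*\|^2\le\|x_k-x^*\|^2+\lambda_k^2L^2$, and summability of $\lambda_k^2$ then yields existence of $\lim_{k\to\infty}\|x_k-x^*\|$ for each $x^*\in S^*$, which combined with the cluster-point result of Theorem~\ref{th:main} forces convergence of the full sequence. The extra details you supply (why all three operators coincide for $M=1$, and the standard quasi-Fej\'er argument for existence of the limit) are correct elaborations of steps the paper leaves implicit.
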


\begin{proof}
	In this case, every $x_{k+1}=P_Cx_{k,N}$ is feasible (i.e., $x_{k+1}\in C$). Hence $f(x_k)-f^*\ge 0$
	and the inequality (\ref{xk+1}) implies that
	\begin{equation*}
	\|x_{k+1}-x^*\|^2\le \|x_k-x^*\|^2+\lambda_k^2L^2.
	\end{equation*}
	Therefore, the convergence of the series $\sum_{k=0}^\infty\lambda_k^2<\infty$
	implies that $\lim_{k\to\infty}\|x_k-x^*\|$ exists for each $x^*\in S^*$,
	which in turns implies that $\{x_k\}$ converges since we have proved that
	every cluster point of $\{x_k\}$ is in $S^*$.
\end{proof}

Consider the case of the parallel projections algorithm where the stepsizes are not diminishing.
We have the result below.

\begin{proposition}
	Let $\{x_k\}$ be generated by the parallel projection algorithm with nondiminishing stepsize sequence $\{\lambda_k\}$. Then
	\begin{equation}\label{eq:f}
	\liminf_{k\to\infty} f(x_k)\le f^*+\frac12\overline{\lambda} L^2,
	\end{equation}
	where $\overline{\lambda}=\limsup_{k\to\infty}\lambda_k.$
\end{proposition}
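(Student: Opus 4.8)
The plan is to argue by contradiction, basing everything on the single descent estimate supplied by Lemma~\ref{le:estimate}. First I would dispose of the trivial cases: if $\overline{\lambda}=+\infty$ the right-hand side of \eqref{eq:f} is infinite and there is nothing to prove, while if $L=0$ all the $f_j$ are constant and $f(x_k)\equiv f^*$; hence I may assume $0<\overline{\lambda}<\infty$ and $L>0$. Fixing any $x^*\in S^*$ (nonempty by the standing assumption) and applying Lemma~\ref{le:estimate} with $x=x^*$ gives, for every $k$,
\begin{equation*}
\|x_{k+1}-x^*\|^2\le \|x_k-x^*\|^2-2\lambda_k[f(x_k)-f^*]+\lambda_k^2L^2 .
\end{equation*}
Assume, towards a contradiction, that \eqref{eq:f} fails, so that there exist $\varepsilon_0>0$ and $k'$ with $f(x_k)-f^*\ge \tfrac12\overline{\lambda} L^2+\varepsilon_0$ for all $k\ge k'$.

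Next I would absorb the quadratic stepsize term into the descent by exploiting the definition of $\overline{\lambda}=\limsup_k\lambda_k$. Setting $\eta:=\varepsilon_0/L^2$, there is $k''$ such that $\lambda_k\le \overline{\lambda}+\eta$ for all $k\ge k''$. For $k\ge k_1:=\max\{k',k''\}$ I substitute the lower bound on $f(x_k)-f^*$ and use $\lambda_k^2L^2\le\lambda_kL^2(\overline{\lambda}+\eta)$ to obtain
\begin{equation*}
-2\lambda_k[f(x_k)-f^*]+\lambda_k^2L^2\le -\lambda_k\overline{\lambda} L^2-2\lambda_k\varepsilon_0+\lambda_kL^2(\overline{\lambda}+\eta)=-\lambda_k(2\varepsilon_0-\eta L^2)=-\varepsilon_0\lambda_k,
\end{equation*}
so that $\|x_{k+1}-x^*\|^2\le \|x_k-x^*\|^2-\varepsilon_0\lambda_k$ for all $k\ge k_1$.

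Finally I would telescope this inequality from $k_1$ to $K$ to get $\|x_{K+1}-x^*\|^2\le \|x_{k_1}-x^*\|^2-\varepsilon_0\sum_{k=k_1}^K\lambda_k$ and let $K\to\infty$. The key observation at this stage is that $\overline{\lambda}>0$ already forces $\sum_k\lambda_k=\infty$: a subsequence of $\{\lambda_k\}$ converges to $\overline{\lambda}$, hence infinitely many terms exceed $\overline{\lambda}/2$, and summing these alone diverges. Therefore the right-hand side tends to $-\infty$, contradicting the nonnegativity of $\|x_{K+1}-x^*\|^2$, which completes the proof. The only genuinely delicate point is the second step, namely cleanly trading the $\lambda_k^2L^2$ term against the $\tfrac12\overline{\lambda} L^2$ slack via the $\limsup$ bound $\lambda_k\le\overline{\lambda}+\eta$; alongside this is the remark that no separate ``$\sum_k\lambda_k=\infty$'' hypothesis is needed, since it is automatic once $\overline{\lambda}>0$. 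Note also that, in contrast to the diminishing-stepsize results, boundedness of $\{x_k\}$ is not required for this argument.
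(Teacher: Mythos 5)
Your proof is correct and follows essentially the same route as the paper: negate the claim, feed the resulting lower bound on $f(x_k)-f^*$ and the $\limsup$ bound $\lambda_k\le\overline{\lambda}+\eta$ into the descent estimate of Lemma~\ref{le:estimate}, telescope, and contradict the divergence of $\sum_k\lambda_k$. The only (immaterial) differences are that you anchor the estimate at $x^*\in S^*$ where the paper uses a near-optimal $\hat{x}\in C$, and that you explicitly note $\overline{\lambda}>0$ already forces $\sum_k\lambda_k=\infty$ and dispose of the edge cases $\overline{\lambda}=\infty$ and $L=0$.
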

\begin{proof}
	Suppose (\ref{eq:f}) were not true; then
	\begin{equation}\label{eq:f(xk)}
	\liminf_{k\to\infty} f(x_k)> f^*+\frac12\overline{\lambda} L^2.
	\end{equation}
	For any $\varepsilon>0$, find $\hat{x}\in C$ and $k\ge 1$ such that
	\begin{itemize}
		\item $f^*>f(\hat{x})-\varepsilon$;
		\item $f(x_k)>\underline{f}-\varepsilon$ ($\underline{f}:=\liminf_{k\to\infty} f(x_k)$) for all $k\ge k_0$;
		\item $\lambda_k<\overline{\lambda}+\frac{2\varepsilon}{L^2}$ for all $k\ge k_0$.
	\end{itemize}
	These combining with (\ref{eq:f(xk)}) imply that
	\begin{equation}\label{eq:f(xk2)}
	f(x_k)> f(\hat{x})+\frac12\lambda_kL^2+\varepsilon,\quad k\ge k_0.
	\end{equation}
	Now applying (\ref{eq:xk+1}) and using (\ref{eq:f(xk2)}) we further obtain, for all $k\ge k_0$,
	\begin{align*}
	\|x_{k+1}-\hat{x}\|^2&\le \|x_{k}-\hat{x}\|^2-2\lambda_k[f(x_k)-f(\hat{x})]+\lambda_k^2L^2\\
	&\le \|x_{k}-\hat{x}\|^2-2\varepsilon\lambda_k.
	\end{align*}
	It turns out that
	\begin{equation*}
	\sum_{i=k_0}^k\lambda_i\le\frac{\|x_{k_0}-\hat{x}\|^2}{2\varepsilon},\quad k\ge k_0.
	\end{equation*}
	Hence, $\{\lambda_k\}$ must be in $\ell_1$, a contradiction to the assumption that
	$\{\lambda_k\}$ is nondiminishing.
\end{proof}

\begin{remark}[Unrestricted and random projections] Note that the cyclic projection algorithm can be generalized to an \emph{unrestricted} version, where the order of the projections is not sequential, but is determined by a mapping $\phi: \NN\to \{1,\dots, M\}$, defined so that each  of the sets $C_1,C_2,\dots, C_M$ feature in this algorithm infinitely many times. If there is a uniform bound on the gap between the number of steps separating the next nearest appearance of the same set in the sequence, then our analysis of the method of cyclic projections can be generalized to include this version of the method. It remains to be seen if the convergence still holds without this assumption, and whether some probabilistic bounds can be obtained for a randomized version of the method.
\end{remark}

\section{Relaxed Projection Algorithms}

Here we briefly outline ideas of relaxed projection approaches that can be used whenever the projections may be expensive or unavailable, but an approximation is reasonably easy to compute. This can be considered in a general framework of cutters (e.g. see \cite{Cegielski}). We consider the most popular implementation of cutters via the subgradients of constraint functions.

Assume each $C_i$ is a level set of a convex function, that is,
\begin{equation}\label{eq:4:setCQ}
C_i=\{x\in \HH : c_i(x)\le 0\},\quad 1\le i\le M,
\end{equation}
where $c_i: \HH \to \mathbb{R}\cup\{\infty\}$ is a convex function which
is subdifferentiable on an open convex set that contains $C_i$.
Recall that the subdifferential of $c_i$ at $x\in {\rm dom}\, c_i$ is defined by
$$\partial c_i(x)=\{z\in \HH : c_i(w)\ge c_i(x)+\langle w-x,z\rangle,\quad w\in \HH \}.$$
In this setting we are able to replace projections onto the $C_i's$ with 
projections onto half-spaces, which then have closed formulae. 

We consider the relaxed parallel projection algorithm (RPPA) and the relaxed sequential projection algorithm (RSPA)
which generate a sequence  $\{x_k\}$ by the following iteration processes:

\begin{equation}
\begin{cases}\tag{RPPA}
	x_{k,0}=x_k, \\[0.1cm]
	x_{k,j}=x_{k,j-1}-\lambda_k v_{k,j},\quad  v_{k,j}\in\partial f_{j}(x_{k,j-1}),\   j = 1,2,\cdots, N,  \\[0.1cm]
	x_{k+1}=\sum_{i=1}^M \beta_i P_{C^k_i}x_{k,N},
\end{cases}
\end{equation}
and, respectively,
\begin{equation}
\begin{cases}\tag{RSPA}
	x_{k,0}=x_k, \\[0.1cm]
	x_{k,j}=x_{k,j-1}-\lambda_k v_{k,j},\quad  v_{k,j}\in\partial f_{j}(x_{k,j-1}),\   j = 1,2,\cdots, N,  \\[0.1cm]
	x_{k+1}=P_{C^k_M}\cdots P_{C^k_1}x_{k,N},
\end{cases}
\end{equation}
where $\beta_i>0$ and sum to one: $\sum_{i=1}^M\beta_i=1$, and $C^k_{i}$ is a half-space defined by
\begin{equation}\label{Cki}
C^k_{i}:=\{x\in \HH : c_{i}(x_k)+\langle \xi_i^k,x-x_k\rangle\le 0\},\quad \xi_i^k\in\partial c_{i}(x_k)
\end{equation}
for $i=1,\cdots, M$. Note that $C^k_{i}\supset C_{i}$ for each $i$ and $k$. Indeed, if $x\in C_{i}$ (i.e., $c_{i}(x)\le 0$), then by the
subdifferential inequality, we get
$$0\ge c_{i}(x)\ge  c_{i}(x_k)+\langle \xi_i^k,x-x_k\rangle.$$
This shows that $x\in C^k_{i}$.

Define
$$T_k:=\sum_{i=1}^M \beta_i P_{C^k_i}\ \text{(for RPPA)}\quad {\rm or}\quad T_k:=P_{C^k_M}\cdots P_{C^k_1}\   \text{(for RSPA)}.$$
Then $T_k$ is nonexpansive (as a convex combination (or composite) of projections). Moreover, we can rewrite $x_{k+1}=T_k x_{k,N}$.
Note that
$$\Fix T_k=\bigcap_{i=1}^M C^k_i\supset \bigcap_{i=1}^M C_i=C.$$

First we consider the sequence $\{x_k\}$ generated by the (RPPA). An immediate analysis shows that 
Lemma \ref{le:estimate} remains valid for the (RPPA), and
Lemma \ref{le:prop-ppa} valid for the (RPPA) as well with $C_j$ replaced with
$C_j^k$ for each $j$. We now verify Lemma \ref{le:ppa1} for the (RPPA).
As a matter of fact, we can follow the same way of the proof of Lemma \ref{le:prop-ppa},
and (\ref{sum-j}) to get
\begin{equation}\label{eq:rppa1}
\sum_{j=1}^N\beta_j d_{C^{k_l}_j}^2(x_{k_l})\le d_C^2(x_{k_l})-d_C^2(x_{k_l+1})+O(\lambda_{k_l})<\frac1{l}+O(\lambda_{k_l})\to 0.
\end{equation}
Since $\{x_k\}$ is a bounded sequence in a finite dimensional space, we may assume that $x_{k_l}\to\hat{x}$.
We then get (for distance functions are 1-Lipschitz continuous)
\begin{equation}\label{eq:rppa2}
d_{C^{k_l}_j}(\hat{x})\to 0,\quad j=1,2,\cdots, M.
\end{equation}
It follows that there exists some $z_{j,l}\in C^{k_l}_j$ such that
\begin{equation}\label{eq:rppa3}
\|z_{j,l}-\hat{x}\|\to 0\quad (l\to\infty), \   j=1,2,\cdots, M.
\end{equation}
Since $z_{j,l}\in C^{k_l}_j$, we have
\begin{equation}\label{eq:rppa4}
c_{j}(x_{k_l})+\langle \xi_j^{k_l},z_{j,l}-x_{k_l}\rangle\le 0.
\end{equation}
Noting the boundedness of $(\xi_j^{k})$ and using the facts $z_{j,l}\to \hat{x}$ and $x_{k_l}\to\hat{x}$,
we immediately obtain that the second term in the last relation tends to zero as $l\to\infty$.
Consequently, we get $c_j(\hat{x})\le 0$ for each $j$; that is, $x\in C$.

Next consider the sequence  $\{x_k\}$ generated by the (RSPA).
In this case we still have Lemma \ref{le:estimate} valid for the (RSPA). Moreover,
Lemma \ref{le:prop-spa} remains valid for the (RPPA)  with $C_j$ replaced with
$C_j^k$ for each $j$. To see  Lemma \ref{le:ppa1} is also valid for the (RSPA),
we find that the relation \eqref{eq:rppa1} for the (RPPA) is replaced by 
the relation below for the (RSPA):
\begin{equation}\label{eq:rspa1}
\sum_{j=1}^M d^2_{C_j^{k_l}}(P_{C^{k_l}_{j-1}}\cdots P_{C^{k_l}_1}x_{k_l})
\le d_C^2(x_{k_l})-d_C^2(x_{k_l+1})+O(\lambda_{k_l})<\frac1{l}+O(\lambda_{k_l})\to 0.
\end{equation}
Assuming $x_{k_l}\to \hat{x}$ as $l\to\infty$, we get
\begin{equation*}
d_{C_j^{k_l}}(P_{C^{k_l}_{j-1}}\cdots P_{C^{k_l}_1}\hat{x})\to 0\quad {\rm as}\ l\to\infty
\end{equation*}
for each $j=1,2,\cdots, M$. It then turns out that we can find $z_{j,k_l}\in C_j^{k_l}$
such that
$z_{j,k_l}\to \hat{x}$ as $l\to\infty$ for each $j=1,2,\cdots, M$. Namely, 
\eqref{eq:rppa2}-\eqref{eq:rppa3} remain valid. Then again
from \eqref{eq:rppa4}, we derive that $\hat{x}\in C$.

Finally, the proof of Theorem \ref{th:main} can easily be repeated to prove the convergence of
the (RPPA) and (RSPA), which is stated below.

\begin{theorem}\label{th:rppa}
	Let $\{x_k\}$ be a sequence generated either by the (RPPA) or by the (RSPA).
	Assume ${\rm dim}\, \HH <\infty$ and $\{x_k\}$ is bounded. Then we have
	Assume also (\ref{eq:lambda1}) and (\ref{eq:Lip}).
	\begin{enumerate}
		\item[(i)]
		$\lim_{k\to\infty} d_{S^*}(x_k)=0$; in other words, every cluster point of $\{x_k\}$ is an optimal solution
		of (\ref{min:f});
		
		\item[(ii)] $\lim_{k\to\infty} f(x_k)=f^*$.
	\end{enumerate}
\end{theorem}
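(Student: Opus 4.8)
The plan is to reuse the machinery already assembled for the three exact projection algorithms, observing that the relaxed algorithms differ only in that the fixed convex sets $C_i$ are replaced at each iteration by the outer-approximating half-spaces $C_i^k$ defined in \eqref{Cki}, which satisfy $C_i^k\supseteq C_i$ and hence $\operatorname{Fix}T_k\supseteq C$. Since $C\subseteq\operatorname{Fix}T_k$, every $x\in C$ is still a fixed point of the projection operator applied at each step, so the descent estimate in Lemma~\ref{le:estimate} carries over verbatim: its proof uses only the nonexpansivity of $V$ together with $Vx=x$ for $x\in C$, both of which hold for $T_k$. This is the crucial point that lets the entire convergence argument proceed.

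First I would record that Lemma~\ref{le:estimate} holds for (RPPA) and (RSPA) with $V$ replaced by $T_k$, giving for every $x\in C$
\begin{equation*}
\|x_{k+1}-x\|^2\le \|x_k-x\|^2-2\lambda_k[f(x_k)-f(x)]+\lambda_k^2 L^2.
\end{equation*}
Next I would establish asymptotic feasibility, $\lim_{k\to\infty}d_C(x_k)=0$, which is the analogue of Lemmas~\ref{le:ppa2} and \ref{lem:conv}. This is precisely the content already argued in the running text preceding the theorem: one adapts the proof of Lemma~\ref{le:ppa1} to obtain $\liminf_{k\to\infty}d_C(x_k)=0$, using \eqref{eq:rppa1}--\eqref{eq:rppa4} for (RPPA) and \eqref{eq:rspa1} for (RSPA). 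The key subtlety here is that the sets $C_j^{k}$ vary with $k$, so one cannot directly conclude $\hat x\in C_j$ from $d_{C_j^{k_l}}(\hat x)\to 0$; instead, one extracts points $z_{j,l}\in C_j^{k_l}$ converging to $\hat x$, passes to the limit in the defining inequality \eqref{eq:rppa4}, and uses the boundedness of the subgradients $\xi_j^{k}$ together with the continuity of $c_j$ to deduce $c_j(\hat x)\le 0$, i.e. $\hat x\in C$. I would then upgrade $\liminf d_C(x_k)=0$ to $\lim d_C(x_k)=0$ by exactly the two-case $\varepsilon$-argument of the proof of Lemma~\ref{le:ppa2}, which only relies on the one-step distance contraction and the Lipschitz-1 bound $d_C(x_{k+1})\le d_C(x_{k,N})$, both available here.

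With the descent estimate and asymptotic feasibility in hand, Lemma~\ref{le:AFeasAReg} applies to give asymptotic regularity ($\|x_{k+1}-x_k\|\to 0$) and $\liminf_{k\to\infty}f(x_k)=f^*$, after which the proof of Theorem~\ref{th:main} transfers word for word: the induction establishing \eqref{dS*xk} uses only the conclusions of Lemma~\ref{le:AFeasAReg} and the contraction \eqref{d2S} toward $S^*$ in the case $f(x_k)>f^*+\varepsilon$, which itself follows from the descent estimate valid for $T_k$. I would conclude $\lim_{k\to\infty}d_{S^*}(x_k)=0$ and hence (ii) by continuity of $f$.

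\textbf{The main obstacle} is the asymptotic feasibility step, specifically handling the $k$-dependence of the half-spaces $C_j^k$. Unlike the exact case, the limiting inclusion $\hat x\in C$ cannot be read off from $d_{C_j^{k_l}}(\hat x)\to 0$ directly, because $\hat x$ need not lie in any fixed half-space; the argument genuinely requires the boundedness of the subgradient family $\{\xi_j^k\}$ (which follows from boundedness of $\{x_k\}$ and local boundedness of $\partial c_j$ on the finite-dimensional open set containing $C_j$) to control the linear term $\langle\xi_j^{k_l},z_{j,l}-x_{k_l}\rangle\to 0$ and thereby pass to the limit $c_j(\hat x)\le 0$. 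Verifying this subgradient boundedness and the legitimacy of the limiting passage is where care is needed; everything else is a faithful repetition of the exact-projection proofs.
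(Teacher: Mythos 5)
Your proposal is correct and follows essentially the same route as the paper: the paper likewise observes that $C\subseteq\operatorname{Fix}T_k$ makes Lemma~\ref{le:estimate} carry over, establishes asymptotic feasibility by adapting Lemma~\ref{le:ppa1} through \eqref{eq:rppa1}--\eqref{eq:rppa4} and \eqref{eq:rspa1} (extracting the points $z_{j,l}\in C_j^{k_l}$ and using boundedness of $\{\xi_j^k\}$ to pass to the limit $c_j(\hat x)\le 0$), and then repeats the proof of Theorem~\ref{th:main}. You correctly identified the one genuine subtlety --- the $k$-dependence of the half-spaces --- and resolved it exactly as the paper does.
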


\bigbreak

\section{Acknowledgements} The authors are grateful to the Australian Research Council,
RMIT University  and the Australia--China YSEP program for  the financial support of research visits
that were instrumental in accomplishing this work.
HK was supported in part by NSF of China under grant number U1811461.

\bigbreak

\bibliographystyle{plain}
\bibliography{refs}

\begin{thebibliography}{10}

\bibitem{AC2018}
F.~J. Arag\'{o}n~Artacho and R.~Campoy.
\newblock A new projection method for finding the closest point in the
  intersection of convex sets.
\newblock {\em Comput. Optim. Appl.}, 69(1):99--132, 2018.

\bibitem{BB1996}
H.~H. Bauschke and J.~M. Borwein.
\newblock On projection algorithms for solving convex feasibility problems.
\newblock {\em SIAM Rev.}, 38(3):367--426, 1996.

\bibitem{BC}
H.~H. Bauschke and P.~L. Combettes.
\newblock {\em Convex analysis and monotone operator theory in {H}ilbert
  spaces}.
\newblock CMS Books in Mathematics/Ouvrages de Math\'ematiques de la SMC.
  Springer, Cham, second edition, 2017.
\newblock With a foreword by H\'edy Attouch.

\bibitem{Bau}
H.~H. Bauschke, M.~N. Dao, D.~Noll, and H.~M. Phan.
\newblock On {S}later's condition and finite convergence of the
  {D}ouglas-{R}achford algorithm for solving convex feasibility problems in
  {E}uclidean spaces.
\newblock {\em J. Global Optim.}, 65(2):329--349, 2016.

\bibitem{BLT2017}
J.~M. Borwein, G.~Li, and M.~K. Tam.
\newblock Convergence rate analysis for averaged fixed point iterations in
  common fixed point problems.
\newblock {\em SIAM J. Optim.}, 27(1):1--33, 2017.

\bibitem{BR1965}
A.~Br{\o}ndsted and R.~T. Rockafellar.
\newblock On the subdifferentiability of convex functions.
\newblock {\em Proc. Amer. Math. Soc.}, 16:605--611, 1965.

\bibitem{Cegielski}
A.~Cegielski.
\newblock {\em Iterative methods for fixed point problems in {H}ilbert spaces},
  volume 2057 of {\em Lecture Notes in Mathematics}.
\newblock Springer, Heidelberg, 2012.

\bibitem{CRZ}
A.~Cegielski, S.~Reich, and R.~Zalas.
\newblock Regular sequences of quasi-nonexpansive operators and their
  applications, 2017.

\bibitem{Combettes1997}
P.~L. Combettes.
\newblock Hilbertian convex feasibility problem: convergence of projection
  methods.
\newblock {\em Appl. Math. Optim.}, 35(3):311--330, 1997.

\bibitem{CW2005}
P.~L. Combettes and V.~R. Wajs.
\newblock Signal recovery by proximal forward-backward splitting.
\newblock {\em Multiscale Model. Simul.}, 4(4):1168--1200, 2005.

\bibitem{DN2009}
A.~R. De~Pierro and E.~S. Helou~Neto.
\newblock From convex feasibility to convex constrained optimization using
  block action projection methods and underrelaxation.
\newblock {\em Int. Trans. Oper. Res.}, 16(4):495--504, 2009.

\bibitem{FG2014}
A.~Frangioni and E.~Gorgone.
\newblock Bundle methods for sum-functions with ``easy'' components:
  applications to multicommodity network design.
\newblock {\em Math. Program.}, 145(1-2, Ser. A):133--161, 2014.

\bibitem{Gibali2018}
A.~Gibali, K.-H. K{\"u}fer, D.~Reem, and P.~S{\"u}ss.
\newblock A generalized projection-based scheme for solving convex constrained
  optimization problems.
\newblock {\em Computational Optimization and Applications}, 70(3):737--762,
  Jul 2018.

\bibitem{HTF2009}
T.~Hastie, R.~Tibshirani, and J.~Friedman.
\newblock {\em The Elements of Statistical Learning: Data Mining, Inference,
  and Prediction}.
\newblock Springer, New York, 2nd ed. edition, 2009.

\bibitem{KH2004}
T.~Kärkkäinen and E.~Heikkola.
\newblock Robust formulations for training multilayer perceptrons.
\newblock {\em Neural Computation}, 16(4):837--862, 2004.

\bibitem{LCJ2013}
M.~Leinonen, M.~Codreanu, and M.~Juntti.
\newblock Distributed joint resource and routing optimization in wireless
  sensor networks via alternating direction method of multipliers.
\newblock {\em IEEE Transactions on Wireless Communications}, 12, 2013.

\bibitem{LS2018}
S.~B. Lindstrom and B.~Sims.
\newblock Survey: Sixty years of {D}ouglas--{R}achford, 2018.

\bibitem{LD2018}
J.~K. Liu and X.~L. Du.
\newblock A gradient projection method for the sparse signal reconstruction in
  compressive sensing.
\newblock {\em Appl. Anal.}, 97(12):2122--2131, 2018.

\bibitem{Necoara2018}
I.~Necoara, Yu. Nesterov, and F.~Glineur.
\newblock Linear convergence of first order methods for non-strongly convex
  optimization.
\newblock {\em Mathematical Programming}, Jan 2018.

\bibitem{NB2001}
A.~Nedi\'{c} and D.~P. Bertsekas.
\newblock Incremental subgradient methods for nondifferentiable optimization.
\newblock {\em SIAM J. Optim.}, 12(1):109--138, 2001.

\bibitem{OPX2006}
J.~G. O'Hara, P.~Pillay, and H.~K. Xu.
\newblock Iterative approaches to convex feasibility problems in {B}anach
  spaces.
\newblock {\em Nonlinear Anal.}, 64(9):2022--2042, 2006.

\bibitem{Polyak1987}
B.~T. Polyak.
\newblock {\em Introduction to optimization}.
\newblock Translations Series in Mathematics and Engineering. Optimization
  Software, Inc., Publications Division, New York, 1987.
\newblock Translated from the Russian, With a foreword by Dimitri P. Bertsekas.

\bibitem{RZ2017}
S.~Reich and R.~Zalas.
\newblock The optimal error bound for the method of simultaneous projections.
\newblock {\em J. Approx. Theory}, 223:96--107, 2017.

\bibitem{R1997}
R.~T. Rockafellar.
\newblock {\em Convex Analysis}.
\newblock Princeton University Press, 1997.

\bibitem{ST2018}
T.~Sirirut and P.~Tianchai.
\newblock On solving of constrained convex minimize problem using gradient
  projection method.
\newblock {\em Int. J. Math. Math. Sci.}, pages Art. ID 1580837, 10, 2018.

\bibitem{SNW2011}
S.~Sra, S.~Nowozin, and S.J. Wright.
\newblock {\em Optimization for Machine Learning}.
\newblock Neural Information Processing Ser. MIT Press, 2012.

\bibitem{Wang-Xu2011}
F.~Wang and H.~K. Xu.
\newblock Cyclic algorithms for split feasibility problems in {H}ilbert spaces.
\newblock {\em Nonlinear Anal.}, 74(12):4105--4111, 2011.

\bibitem{Xu2011}
H.~K. Xu.
\newblock Averaged mappings and the gradient-projection algorithm.
\newblock {\em J. Optim. Theory Appl.}, 150(2):360--378, 2011.

\end{thebibliography}

\end{document}